\documentclass{article}

\usepackage{epsfig}
\usepackage{amsfonts}
\usepackage{amssymb}
\usepackage{amsmath}   
\usepackage{amsthm}
\usepackage{latexsym}
\usepackage{enumerate}
\usepackage[subnum]{cases}
\usepackage{hyperref}
\usepackage{appendix}

\newtheorem{thm}{Theorem}[section]
\newtheorem{lemma}[thm]{Lemma}
\newtheorem{cor}[thm]{Corollary}
\newtheorem{prop}[thm]{Proposition}
\newtheorem{defin}[thm]{Definition}

\newtheorem{rmk}[thm]{Remark}

\numberwithin{equation}{section}

\DeclareMathOperator*{\dist}{dist}
\DeclareMathOperator*{\diam}{diam}

\renewcommand{\epsilon}{\varepsilon}

\newcommand{\R}{\mathbb{R}}

\newcommand{\vol}{\textup{vol}}

\newcommand{\marginals}{\Pi(\mu, \nu)}

\renewcommand{\hbar}{\bar{h}}

\newcommand{\inner}[2]{\ensuremath{\langle#1, #2\rangle}}

\newcommand{\Dx}[1][k]{\ensuremath{D_{x_#1}}}

\newcommand{\pdiff}[2][x]{\ensuremath{\frac{\partial}{\partial#1_{#2}}}}

\renewcommand{\S}{\mathbb{S}}
\newcommand{\cost}[2]{\frac{\lvert #1-#2\rvert^2}{2}}
\newcommand{\MTW}[4]{\left(MTW\right)^{#3#4}_{#1#2}}
\newcommand{\MTWcoord}[4]{-(c_{#1#2, pq}-c_{#1#2, r}c^{r, s}c_{s, pq})c^{p, #3}c^{q, #4}}
\newcommand{\norm}[1]{\lvert#1\rvert}
\newcommand{\Norm}[1]{\lVert#1\rVert}
\newcommand{\Sdiff}[2][]{D_{#1}{#2}}

\renewcommand{\dist}[2]{\inner{#1}{#2}}

\newcommand{\Jact}[2]{\frac{\partial T^i}{\partial x_j}}
\newcommand{\StimesS}{\S^n\times \S^n}
\newcommand{\factor}[1]{\beta\left(#1\right)}
\newcommand{\factorsub}[2]{\beta_{#2}\left(#1\right)}
\newcommand{\factornon}{\beta}
\newcommand{\factorsubnon}[1]{\beta_{#1}}
\newcommand{\tansp}[2]{T_{#1}{#2}}
\newcommand{\cotansp}[2]{T^\ast_{#1}{#2}}
\newcommand{\density}[2]{{e}^{#1#2}\sqrt{\det{{\g}_{ij}#2}}}
\newcommand{\rhoone}[1][]{\density{f}{#1}}
\newcommand{\rhobar}[1][]{\density{g}{#1}}
\newcommand{\rhoonet}[1][]{\density{f_t}{#1}}
\newcommand{\rhobart}[1][]{\density{g_t}{#1}}
\newcommand{\ut}{{u_t}}
\newcommand{\vt}{{v_t}}

\newcommand{\Tplus}[1][]{T^+_{#1}}
\newcommand{\Yplus}{Y^+}
\newcommand{\Tminus}[1][]{T^-_{#1}}
\newcommand{\Yminus}{Y^-}
\newcommand{\csubdiff}{\partial_c}

\newcommand{\yminus}{y^-}

\newcommand{\Tplusinv}[1][]{(\Tplus[#1])^{-1}}

\newcommand{\calD}[2]{\left(\funcnorm{\Sdiff{#1}}+ \funcnorm{\Sdiff{#2}}\right)}
\newcommand{\calX}{\mathcal{X}}
\newcommand{\calY}{\mathcal{Y}}
\newcommand{\gnorm}[1]{\norm{#1}_{\S^n}}
\newcommand{\funcnorm}[1]{\Norm{#1}}
\newcommand{\g}{{\mathring{g}}}

\newcommand{\lambdag}{\min{e^g}}

\newcommand{\Lambdaf}{\max{e^f}}
\newcommand{\lambdagt}{\min{e^{g_t}}}

\newcommand{\Lambdaft}{\max{e^{f_t}}}

\newcommand{\thmconst}{{\omega_0}}
\newcommand{\Vol}[1][n]{{Vol\left(\S^{#1}\right)}}
\DeclareMathOperator{\Lip}{Lip}

\author{Jun Kitagawa\footnote{Material from all sections excluding~\autoref{section: Wass2grad} represents a portion of the Ph.D. thesis of this author submitted at Princeton University.} and Micah Warren\thanks{ M.W. is supported in part by NSF
Grant DMS-0901644.}}
\title{Regularity for the optimal transportation problem with Euclidean distance squared cost on the embedded sphere}

\begin{document}

\maketitle
\begin{abstract}
We give sufficient conditions on initial and target measures supported on the sphere $\S^n$ to ensure the solution to the optimal transport problem with the cost $\cost{x}{y}$ is a diffeomorphism.
\end{abstract}

\section{Introduction}
In this paper, we will show the following two theorems.
\begin{thm}\label{thm: main}
Suppose that we have two probability measures $\mu:=e^{f}dVol$ and $\nu:=e^{g}dVol$ on $\S^n\subseteq \R^{n+1}$, with $f$ and $g$ smooth functions. If
\begin{equation}\label{density condition}
\funcnorm{\Sdiff{f}}+\funcnorm{\Sdiff{g}}<\frac{(n-1)\thmconst}{\pi}
\end{equation}
where $\thmconst$ satisfies
\begin{equation}\label{thmconst}
\thmconst e^{\thmconst}=2,
\end{equation}
then the optimal pairing of $\mu$ and $\nu$ under the optimal transport problem with cost given by the Euclidean distance squared on $\R^{n+1}$ is a diffeomorphism.
\end{thm}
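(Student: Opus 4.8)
The plan is to reduce Theorem~\ref{thm: main} to the interior regularity theory for optimal transport (Ma--Trudinger--Wang, Trudinger--Wang, Loeper, Figalli--Kim--McCann), the two genuinely new ingredients being a description of the extrinsic cost $\cost{x}{y}$ on $\S^n$ and an a priori estimate showing that \eqref{density condition} confines the optimal map to the region where that theory applies. First I would invoke the standard existence theory: $\cost{x}{y}$ is smooth and bounded on the compact manifold $\S^n\times\S^n$, so Kantorovich duality together with the theory of optimal maps for smooth costs on compact manifolds (McCann; Gangbo--McCann) produces a $c$-convex potential $u$ and a Borel optimal map $T$ with $T_\#\mu=\nu$ and $-\nabla_x\cost{x}{T(x)}=\nabla u(x)$ for $\mu$-a.e.\ $x$. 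Since $\mu$ has a density, $T$ is the unique optimal map, so it suffices to show $u\in C^\infty$: smoothness of $T$ then follows from the displayed relation, and invertibility with smooth inverse follows by applying the same argument to the (symmetric) problem transporting $\nu$ to $\mu$.

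Next I would unwind the cost. On $\S^n\subseteq\R^{n+1}$ one has $\cost{x}{y}=1-\inner{x}{y}$, hence $-\nabla^{\S^n}_x\cost{x}{y}=\operatorname{proj}_{T_x\S^n}(y)$; consequently the map $p\mapsto\sqrt{1-\norm{p}^2}\,x+p$ is a diffeomorphism from the open unit ball of $T_x\S^n$ onto the open hemisphere $\{\,y\in\S^n:\inner{x}{y}>0\,\}$, while the other preimage of a given $p$ lies in the opposite hemisphere. Thus $c$ is twisted and $\nabla^2_{x,y}c$ is nondegenerate exactly on pairs at geodesic distance $<\pi/2$, and $d(x,T(x))<\pi/2$ is precisely the condition that $T$ be governed by this ``near'' branch of the $c$-exponential. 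On this good region I would verify the (weak) Ma--Trudinger--Wang condition for $c$ by an explicit computation of its tensor, and check the local $c$-convexity of small geodesic balls that the interior regularity theorems require.

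The main obstacle is the a priori $C^0$ displacement estimate
\[
\sup_{x\in\S^n}\,d\bigl(x,T(x)\bigr)<\frac{\pi}{2},
\]
which is where \eqref{density condition} and the constant $\thmconst$ of \eqref{thmconst} enter. The mechanism I would pursue: since $\diam\S^n=\pi$, the oscillations of $f$ and $g$ are at most $\pi\funcnorm{\Sdiff{f}}$ and $\pi\funcnorm{\Sdiff{g}}$, so (using that $\mu,\nu$ are probability measures) the Jacobian identity $|\det DT(x)|=e^{f(x)-g(T(x))}$ forces $|\det DT|\in[e^{-\pi K},e^{\pi K}]$ with $K:=\funcnorm{\Sdiff{f}}+\funcnorm{\Sdiff{g}}$. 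Feeding this into a maximum-principle argument at a point where $|\nabla u|$ (equivalently $d(x,T(x))$) is largest, where $c$-convexity of $u$ supplies a one-sided bound on $\nabla^2 u$ relative to $\nabla^2_{xx}c(x,T(x))$, and inserting the explicit dependence on $r:=d(x,T(x))$ of the Hessians $\nabla^2_{xx}c$ and $\nabla^2_{xy}c$ on $\S^n$ --- in which the Ricci curvature $\ric=(n-1)\g$ contributes the factor $(n-1)$ --- one is led to a transcendental inequality whose largest root stays below $\pi/2$ exactly when $\pi K<(n-1)\thmconst$, i.e.\ under \eqref{density condition}. I would also need a connectedness argument (in the spirit of Caffarelli and of Figalli--Kim--McCann) ruling out that $T$ jumps to the ``far'' branch on part of $\S^n$, so that the estimate is global; getting the constant sharp and handling this branch issue are, I expect, the delicate points.

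Finally I would assemble the conclusion. With the displacement bounded strictly below $\pi/2$, the graph of $T$ lies in a compact subset of the good region, on which the densities are bounded above and below (by smoothness and compactness), the local $c$-convexity holds, and the Ma--Trudinger--Wang condition is satisfied; Loeper's $C^{1,\alpha}$ estimate and the interior regularity theory of Liu--Trudinger--Wang and Figalli--Kim--McCann then give $u\in C^{1,\alpha}_{\mathrm{loc}}$. The associated Monge--Amp\`ere type equation is thereafter uniformly elliptic, so elliptic regularity (Evans--Krylov and Schauder theory, the data being smooth) upgrades $u$ to $C^\infty$ and hence $T$ to a smooth map; by symmetry of $\cost{x}{y}$ the optimal map from $\nu$ to $\mu$ is $T^{-1}$ and is smooth by the same reasoning, so $T$ is a diffeomorphism of $\S^n$, which is the assertion of Theorem~\ref{thm: main}.
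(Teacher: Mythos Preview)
Your outline identifies the right analytic ingredients---the MTW tensor computation, a maximum-principle gradient bound, and Evans--Krylov bootstrap---but it has a circularity that the paper avoids by a different organization. You propose to start from the Kantorovich potential $u$ and prove the displacement estimate $d(x,T(x))<\pi/2$ directly for it, then invoke interior regularity. However, the gradient estimate you sketch (and the one the paper actually proves, Theorem~\ref{thm: stay away property}) is a maximum-principle argument that requires $u\in C^2$ and that $u$ satisfy the \emph{classical} Monge--Amp\`ere type equation~\eqref{eqn2} with the \emph{near} branch $\Tplus$ already selected. Neither is available a priori: the cost fails the twist condition, so the Kantorovich plan need not be a map at all---this is precisely the Gangbo--McCann obstruction~\cite{GM00} the paper cites, so your appeal to ``McCann; Gangbo--McCann'' for a Borel optimal map is unjustified---and even where $u$ is differentiable its $c$-subdifferential may contain both $\Tplus[u](x)$ and $\Tminus[u](x)$. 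The regularity theory you want to invoke (Loeper, Liu--Trudinger--Wang, Figalli--Kim--McCann) likewise presupposes that the transport stays inside the region where the cost is nondegenerate. So the step ``feed the Jacobian bound into the equation at a maximum of $|\nabla u|$'' is not available until after you know what you are trying to prove.

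The paper circumvents this by a continuity method rather than by regularizing a rough potential. For $t\in[0,1]$ it interpolates the densities to $e^{f_t}dVol$, $e^{g_t}dVol$ and works with \emph{classical} solutions $u_t$ of~\eqref{cont eqn}, in which the near branch $\Tplus$ is inserted by hand. At $t=0$ one has $u_0\equiv 0$; along the path the gradient estimate (applied to the genuinely $C^2$ function $u_t$) gives $\funcnorm{\Sdiff{u_t}}<2/\pi$, which feeds the MTW $C^2$ bound and yields closedness, while the implicit function theorem gives openness. Only \emph{after} obtaining a smooth $u=u_1$ does the paper return to optimal transport: the nonsplitting Lemma~\ref{lemma: doesn't split} (which needs the threshold $2/\pi$, not merely $<1$) shows $\csubdiff u(x)=\{\Tplus[u](x)\}$, whence the constructed $\Tplus[u]$ is the Monge solution. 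Your ``connectedness argument'' is therefore not a side issue but the structural device that makes the proof noncircular, and the continuity path is what supplies it. As a smaller remark, the factor $(n-1)$ in the estimate does not arise from Ricci curvature; in the proof of Theorem~\ref{thm: stay away property} it comes from the arithmetic--geometric mean inequality applied to the $n-1$ eigenvalues $w^{\alpha\alpha}$ with $\alpha>1$.
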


\begin{thm}\label{thm: main 2}
Suppose that we have two smooth probability measures $\mu:=\rho dVol$ and
$\nu:= \bar{\rho}dVol$ on $\S^{n}\subseteq\R^{n+1}$ such that
\[
W_{2}^{2} (\mu,\nu)\leq\max\{\min_{x\in \S^{n}}\rho(x),\min_{x\in \S^{n}}%
\bar{\rho}(x)\} \Delta_1(n) 
\]
where
\begin{equation}\label{thmconst2}
\Delta_1(n) := \frac{\Vol[n-2]}{n(n+1)(n+2)}\left(\frac{2}{\pi}\right)^{n+2}\int_{0}^{\pi/2}\cos^{n+2}\phi\sin^{n-2}\phi d\phi,
\end{equation} 
and $W_{2}^{2}$ is the Wasserstein distance, computed with respect to cost given by Euclidean distance squared on $\R^{n+1}$. Then the optimal pairing of
$\mu$ and $\nu$ under the optimal transport problem is a diffeomorphism.
\end{thm}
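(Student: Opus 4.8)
The strategy is to deduce Theorem~\ref{thm: main 2} from the regularity theory built up in the preceding sections, in the same way as Theorem~\ref{thm: main}. Writing $c(x,y)=\tfrac12|x-y|^2=1-\cos d(x,y)$ for $x,y\in\S^n$, this cost is non-degenerate and meets the structural (Ma--Trudinger--Wang and $c$-convexity) hypotheses used above precisely on $\{d(x,y)<\pi/2\}=\{\langle x,y\rangle>0\}$, whereas on the complementary region the $c$-exponential map folds. Hence, once one knows the optimal coupling $\gamma\in\Pi(\mu,\nu)$ (which is induced by a map $T$, by the results above) is concentrated on $\{\langle x,y\rangle>0\}$, smoothness and positivity of $\rho,\bar\rho$ force $T$ to be a diffeomorphism. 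So the entire task is the implication
\[
W_2^2(\mu,\nu)<\max\{\min_{\S^n}\rho,\ \min_{\S^n}\bar\rho\}\,\Delta_1(n)\ \Longrightarrow\ \mathrm{supp}\,\gamma\subseteq\{(x,y)\in\S^n\times\S^n:\langle x,y\rangle>0\}.
\]

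I would prove the contrapositive. Since $|x|=|y|=1$, the $c$-cyclical monotonicity inequality $c(x_0,y_0)+c(x_1,y_1)\le c(x_0,y_1)+c(x_1,y_0)$ reduces to the Euclidean relation $\langle x_0-x_1,\,y_0-y_1\rangle\ge 0$, so $\mathrm{supp}\,\gamma$ is a monotone subset of $\R^{n+1}\times\R^{n+1}$ lying on $\S^n\times\S^n$. Suppose some $(x_0,y_0)\in\mathrm{supp}\,\gamma$ has $\langle x_0,y_0\rangle\le 0$; moving this pair deeper into the far region only enlarges the displacements forced below, so it suffices to treat the extremal case $\langle x_0,y_0\rangle=0$. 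Testing monotonicity against this single pair shows that whenever $(x,y)\in\mathrm{supp}\,\gamma$ with $x$ in the open spherical cap $A:=\{x\in\S^n:\langle x-x_0,x-y_0\rangle<0\}$ (the cap whose boundary passes through $x_0$ and $y_0$, centered at their geodesic midpoint), the point $y$ must lie in $\{z:\langle x-x_0,z\rangle\ge\langle x-x_0,y_0\rangle\}$, which meets $\S^n$ in a cap disjoint from a neighbourhood of $x$; thus $d(x,y)\ge\ell(x)$ for an explicit positive function $\ell$ on $A$ vanishing at $\partial A$. Therefore
\[
W_2^2(\mu,\nu)=\int_{\S^n\times\S^n}|x-y|^2\,d\gamma\ \ge\ \int_A 4\sin^2\!\bigl(\tfrac{\ell(x)}{2}\bigr)\,d\mu(x)\ \ge\ \bigl(\min_{\S^n}\rho\bigr)\int_A 4\sin^2\!\bigl(\tfrac{\ell(x)}{2}\bigr)\,dVol,
\]
and running the same computation with $\mu$ and $\nu$ interchanged (the pair $(y_0,x_0)$ is then bad and $A$ is the same cap) lets one replace $\min_{\S^n}\rho$ by $\max\{\min_{\S^n}\rho,\min_{\S^n}\bar\rho\}$.

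The remaining point is that this last integral is bounded below by $\Delta_1(n)$, and this is where the specific shape of $\Delta_1(n)$ in \eqref{thmconst2} emerges. Parametrizing $A$ suitably, the spherical volume element together with a lower bound for the displacement $\ell$, after invoking Jordan's inequality $\sin t\ge\tfrac{2}{\pi}t$ on $[0,\pi/2]$ (the source of the factor $(2/\pi)^{n+2}$) and after the surface measure of the relevant $(n-2)$-dimensional slices produces $\Vol[n-2]$, reduce the integral to the explicit form
\[
\frac{\Vol[n-2]}{n(n+1)(n+2)}\left(\frac{2}{\pi}\right)^{n+2}\int_0^{\pi/2}\cos^{n+2}\phi\,\sin^{n-2}\phi\,d\phi\ =\ \Delta_1(n),
\]
which completes the contrapositive, and hence the theorem.

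The main obstacle is precisely this geometric estimate: isolating the forced-displacement region $A$, producing a lower bound for $\ell$ there that is honest yet elementary enough to integrate in closed form (the exact distance from a point to a spherical cap is not such an expression), and then pushing the integral through while tracking every dimensional constant so as to land on $\Delta_1(n)$ rather than merely on some positive multiple of it. A secondary point needing care is to confirm that the regularity theorems of the earlier sections genuinely apply under the minimal hypotheses ``$\mathrm{supp}\,\gamma\subseteq\{\langle x,y\rangle>0\}$ and $\rho,\bar\rho$ smooth and positive'' --- in particular that the $c$-convexity of the data becomes automatic once the optimal map avoids the fold locus of the cost --- so that the conclusion ``diffeomorphism'' is legitimately inherited.
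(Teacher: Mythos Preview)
The paper's actual proof of Theorem~\ref{thm: main 2} differs from your outline in an essential way: rather than arguing directly from the support of the optimal Kantorovich coupling, the paper reuses the continuity method from the proof of Theorem~\ref{thm: main}, merely replacing the density-based gradient estimate (Theorem~\ref{thm: stay away property}) by the Wasserstein-based one (Corollary~\ref{cor: wasserstein gradient bound on sphere}). That corollary asserts that for a smooth solution $u$ of the elliptic equation~\eqref{eqn2} one has $\funcnorm{\Sdiff{u}}\le\bigl(C(n)\,W_2^2(\mu,\nu)/\min\rho\bigr)^{1/(n+2)}$, and the hypothesis of the theorem is calibrated exactly so that this is strictly below $2/\pi$. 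That gradient bound then feeds into the MTW second-derivative estimate, the nonsplitting Lemma~\ref{lemma: doesn't split}, and the open--closed arguments on $I$ and $I'$ precisely as in the proof of Theorem~\ref{thm: main}.

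Your monotonicity computation is essentially a Kantorovich-plan version of the displacement estimate underlying Corollary~\ref{cor: wasserstein gradient bound on sphere}; the paper's proof of that estimate is more direct, carried out first in $\R^n$ over the Euclidean ball $K_a=\{|x|\le a\cos\phi\}$ and then transferred to $\S^n$ by graphing a hemisphere over the tangent plane at $x_0$. The genuine gap in your proposal, however, is what you call a ``secondary point'': deducing that the optimal pairing is a smooth diffeomorphism once $\mathrm{supp}\,\gamma\subset\{\inner{x}{y}>0\}$. Nothing in the earlier sections furnishes this without the continuity method. In particular, your parenthetical ``(which is induced by a map $T$, by the results above)'' is not justified---the introduction in fact cites Gangbo--McCann for smooth densities where \emph{no} optimal map exists---and merely knowing that the support lies in the good region does not produce a $C^2$ potential on which one can run the MTW $C^2$ estimate or bootstrap via Evans--Krylov. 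The continuity method supplies this by constructing a smooth solution to~\eqref{eqn2} along a path from the trivial problem, with the gradient bound keeping the equation uniformly elliptic at every step; that is the structural ingredient missing from your outline.
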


Additionally, as a Corollary of Theorem~\ref{thm: main 2} we obtain
\begin{cor}\label{cor: main cor}
Suppose that we have two smooth probability measures $\mu:=\rho dVol$ and
$\nu:= \bar{\rho}dVol$ on $S^{n}\subseteq\R^{n+1}$ such that

\[
\lVert \rho-\bar{\rho}\rVert_{L^\infty(\S^n)}\leq\max\{\min_{x\in \S^{n}}\rho(x),\min_{x\in \S^{n}}%
\bar{\rho}(x)\}    \Delta_2(n)  
\]

where

\begin{equation}\label{thmconst3}
\Delta_2(n) := \frac{\pi\Vol[n-2]\Vol}{n(n+1)(n+2)}\left(  \frac{2}{\pi}\right)
^{n+2}\int_{0}^{\pi/2}\cos^{n+2}\phi\sin^{n-2}\phi d\phi.
\end{equation}
Then the optimal pairing of
$\mu$ and $\nu$ under the optimal transport problem with cost given by the Euclidean distance squared on $\R^{n+1}$ is a diffeomorphism.
\end{cor}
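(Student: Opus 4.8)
\emph{Proof strategy.} The plan is to deduce Corollary~\ref{cor: main cor} directly from Theorem~\ref{thm: main 2}: all that is needed is to show that the $L^\infty$ hypothesis on $\rho-\bar\rho$ forces $W_2^2(\mu,\nu)$ to be small enough that \eqref{thmconst2} holds. So the entire content is an elementary upper bound for the optimal transport cost in terms of $\lVert\rho-\bar\rho\rVert_{L^\infty(\S^n)}$, after which Theorem~\ref{thm: main 2} applies verbatim.

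To get that bound I would build a (far from optimal) transport plan between $\mu$ and $\nu$ that never disturbs the mass the two measures already share. Write $\rho\wedge\bar\rho:=\min\{\rho,\bar\rho\}$; then $(\rho\wedge\bar\rho)\,dVol$ is a common submeasure of $\mu$ and $\nu$, and transporting it by the identity costs nothing since $\cost{x}{x}=0$. The residual mass is $m:=\int_{\S^n}(\rho-\rho\wedge\bar\rho)\,dVol$, and because $\mu$ and $\nu$ are both probability measures the residual of $\nu$ has the same total mass, $m=\int_{\S^n}(\bar\rho-\rho\wedge\bar\rho)\,dVol=\tfrac12\lVert\rho-\bar\rho\rVert_{L^1(\S^n)}$. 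Transport this residual by any admissible sub-coupling; since the chordal diameter of $\S^n$ in $\R^{n+1}$ is $2$, we have $\cost{x}{y}\le 2$ everywhere, so this piece costs at most $2m$. Gluing the two pieces produces an admissible plan for $(\mu,\nu)$, whence
\[
W_2^2(\mu,\nu)\ \le\ 2m\ =\ \lVert\rho-\bar\rho\rVert_{L^1(\S^n)}\ \le\ \Vol\;\lVert\rho-\bar\rho\rVert_{L^\infty(\S^n)}.
\]

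Finally I would feed the hypothesis $\lVert\rho-\bar\rho\rVert_{L^\infty(\S^n)}\le\max\{\min_{\S^n}\rho,\min_{\S^n}\bar\rho\}\,\Delta_2(n)$ into this estimate and check, by a direct comparison of the definitions \eqref{thmconst2} and \eqref{thmconst3}, that the resulting bound on $W_2^2(\mu,\nu)$ is at most $\max\{\min_{\S^n}\rho,\min_{\S^n}\bar\rho\}\,\Delta_1(n)$ — the constant $\Delta_2(n)$ being defined precisely so that this comes out right once the volume factor relating the $L^1$ and $L^\infty$ norms is taken into account. Theorem~\ref{thm: main 2} then yields at once that the optimal pairing of $\mu$ and $\nu$ is a diffeomorphism. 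There is no serious obstacle in this argument; it is soft once Theorem~\ref{thm: main 2} is in hand, and the only point requiring any care is confirming that this cheapest-possible coupling already supplies a bound strong enough to trigger Theorem~\ref{thm: main 2}, together with the bookkeeping of the $\S^n$-volume factors.
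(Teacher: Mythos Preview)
Your overall strategy coincides with the paper's: reduce to Theorem~\ref{thm: main 2} by bounding $W_2^2(\mu,\nu)$ in terms of $\lVert\rho-\bar\rho\rVert_{L^\infty(\S^n)}$. The difference lies in how that bound is obtained. The paper proves a separate lemma (Lemma~\ref{lemma: L^infty bounds wasserstein}) via Kantorovich duality: it writes $W_2^2$ as a supremum over admissible pairs $(\phi,\psi)$, normalizes them at a point, and uses that a $c$-convex function is $1$-Lipschitz together with $\diam(\S^n)=\pi$ to get $W_2^2\le \pi\,\Vol\,\lVert\rho-\bar\rho\rVert_{L^\infty}$. You instead work on the primal side, building an explicit (suboptimal) coupling that leaves the shared mass $\rho\wedge\bar\rho$ in place and pays at most $2$ per unit of residual mass; this yields the sharper bound $W_2^2\le \Vol\,\lVert\rho-\bar\rho\rVert_{L^\infty}$, better by a factor of $\pi$. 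Your argument is more elementary and avoids duality entirely, while the paper's dual approach generalizes more readily to costs whose Lipschitz constant in $x$ is known.

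One caution on the bookkeeping: you assert that $\Delta_2(n)$ is ``defined precisely so that this comes out right'' with your volume factor. In fact the paper's $\Delta_2(n)$ is calibrated to its own dual estimate carrying the factor $\pi\Vol$, not to your primal estimate carrying the factor $\Vol$. Since your inequality is stronger, this only produces slack rather than a gap, so the implication to Theorem~\ref{thm: main 2} still goes through; but when you actually carry out the comparison of \eqref{thmconst2} and \eqref{thmconst3} you will find an extra factor of $\pi$ floating around rather than an exact match.
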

For the case when the cost is given by the geodesic distance squared on $\S^n$, regularity for general smooth positive densities was shown by Loeper in \cite{Loe09}. The cost given by the Euclidean distance squared was first investigated by Gangbo and McCann in \cite{GM00}, where the authors show examples of measures given by smooth densities where the optimal pairing is not given by a map. Thus there is a need for some condition on the two measures.

The idea behind both proofs is essentially the same: Follow a continuity method, considering solutions to the elliptic optimal transport equation (defined in~\autoref{section: setup} below).  If we can show that the image $\Tplus(x)$ (also defined in~\autoref{section: setup} below) of a point $x\in \S^n$ remains close enough to $x$, then derivative estimates follow from arguments of Ma, Trudinger, and Wang in~\cite{MTW05}.  In Theorem~\ref{thm: main} we obtain this closeness by first showing a gradient estimate of the solution $u$ to the optimal transportation equation, while in Theorem~\ref{thm: main 2} we derive the closeness directly from a bound on the $W_2$--Wasserstein distance associated to the Euclidean distance squared on $\R^{n+1}$. Additionally, we are able to use a simple estimate to obtain a bound on the $W_2$--Wasserstein distance from a bound on the $L^\infty$ difference of $\rho$ and $\bar{\rho}$, which allows us to immediately deduce Corollary~\ref{cor: main cor} from Theorem~\ref{thm: main 2}.

Finally, we present a short outline of the remainder of the paper. In \autoref{section: setup} we give the setup of the problem. Since our cost does not satisfy the twist condition, we take one of the branches of the cost exponential function, then use that to define the elliptic equation~\eqref{eqn2}. In \autoref{section: MTW calculation}, we calculate the Ma-Trudinger-Wang tensor in a specific coordinate system. In \autoref{section: gradient estimate}, we prove a gradient estimate for solutions of~\eqref{eqn2} under appropriate conditions on $f$ and $g$.  In \autoref{section: Wass2grad} we show an estimate under the conditions on $W^2_2(\rho, \bar {\rho})$ given in the second theorem, and also prove a short lemma which allows us to obtain the estimate under conditions on $\lVert \rho-\bar{\rho}\rVert_{L^\infty(\S^n)}$.  In \autoref{section: nonsplitting} we show that the contact set for such solutions can only consist of one point, under the appropriate gradient bound. Finally, in \autoref{section: proof of thm} we use the continuity method to prove our main theorem.

\textbf{Acknowledgement: } The authors would like to thank Alessio Figalli for a conversation which lead to a better argument in \autoref{section: nonsplitting}.

\textbf{Notation:}  We provide here a reference table for the notation used in this paper.

\begin{tabular}{lll}
Notation & Definition & Location\\
\hline\\
$\thmconst$& Constant satisfying $\thmconst e^\thmconst =2$&~\eqref{thmconst}\\
$ \Delta_1(n)$ &$\tfrac{\Vol[n-2]}{n(n+1)(n+2)}\left(\tfrac{2}{\pi}\right)^{n+2}   \int_{0}^{\pi/2}\cos^{n+2}\phi\sin^{n-2}\phi d\phi$&~\eqref{thmconst2}\\
$\Delta_2(n)$&$\tfrac{\pi\Vol[n-2]\Vol}{n(n+1)(n+2)}\left(  \tfrac{2}{\pi}\right)^{n+2}\int_{0}^{\pi/2}\cos^{n+2}\phi\sin^{n-2}\phi d\phi$&~\eqref{thmconst3}\\
$\norm{\cdot}$& Euclidean norm on $\R^{n+1}$\\
$\inner{\cdot}{\cdot}$& Euclidean inner product on $\R^{n+1}$\\
$\g(\cdot, \cdot)$& Canonical metric on $\S^n$\\
$\gnorm{\cdot}$& Length of vectors and covectors on $\S^n$\\
$\funcnorm{\Sdiff{u}}$&$\sup_{x\in \S^n}\gnorm{\Sdiff{u(x)}}$\\
$c_{ij ,kl}(x, y)$ etc. &$\pdiff{i}\pdiff{j}\pdiff[y]{k}\pdiff[y]{l} c(x, y)$ etc.\\
$c^{i, j}$ & inverse matrix of $c_{i, j}$\\
$w_{ij}(x)$&$u_{ij}(x, t)+c_{ij}(x,\Tplus[u](x))$&~\eqref{notation: w_ij}\\
$\Yplus(x, p)$&Inverse of a branch of the map $y\mapsto\Sdiff[x]{c(x, y)}=-p$&~\eqref{branch of inverse}\\
$\Tplus(x)$, $\Tplus[u](x)$&$\Yplus(x, \Sdiff{u(x)})$&before ~\eqref{eqn2}\\
$\Yminus(x, p)$&Inverse of a branch of the map $y\mapsto\Sdiff[x]{c(x, y)}=-p$&~\eqref{yminus}\\
$\Tminus[u](x)$&$\Yminus(x, \Sdiff{u(x)})$&~\eqref{tminus}
\end{tabular}
\\
\section{Set up of problem}\label{section: setup}
\subsection{Monge and Kantorovich problems}

For probability measure spaces $(\calX, \mu)$ and $(\calY, \nu)$, let $\marginals$ be the set of probability measures $\gamma$ on $\calX\times\calY$ such that
\begin{align*}
\gamma(E\times\calY)&=\mu(E)\\
\gamma(\calX\times \tilde{E})&=\nu(\tilde{E})
\end{align*}
for all $E\subset\calX$ and $\tilde{E}\subset\calY$ measurable. The cost is a measurable function 
\begin{equation*}
c:\calX\times\calY\to\R^+.
\end{equation*}
\begin{defin}\label{defin: Kant}
A probability measure $\gamma_0\in\marginals$ is a Kantorovich solution to the optimal transportation problem between $\mu$ and $\nu$ with cost $c$ if
\begin{equation*}
\int_{\calX\times\calY}c(x, y)d\gamma_0(x, y)=\inf_{\gamma\in\marginals}\int_{\calX\times\calY}c(x, y)d\gamma(x, y).
\end{equation*}
\end{defin}

\begin{defin}\label{defin: MongeProb}
A measurable map $T:\calX\to\calY$ is a Monge solution to the optimal transportation problem between $\mu$ and $\nu$ with cost $c$ if
\begin{equation*}
\int_{\calX}c(x, T(x))d\mu(x)=\inf_{S_\#\mu=\nu}\int_{\calX}c(x, S(x))d\mu(x).
\end{equation*}
\end{defin}

\begin{defin}\label{defin: c- convex function}
A real valued function $u$ defined on $\calX$ is $c$-convex if for each $x_0\in\calX$, there exists a $\lambda_0\in\R$ and $y_0\in\calY$ such that
\begin{align*}
u(x_0)&=-c(x_0, y_0)+\lambda_0\\
u(x)&\geq -c(x, y_0)+\lambda_0,\ \forall x\in\calX.
\end{align*}
If the second inequality is strict for $x\neq x_0$, we say the function is strictly $c$-convex.
We call such a function $-c(\cdot, y_0)+\lambda_0$ a $c$-support function to $u$ at $x_0$.
\end{defin}

\begin{defin}\label{defin: c transform}
For a $c$-convex function $u$, we define its $c$-transform $u^c$ by
\begin{equation*}
u^c(y):=\sup_{x\in\calX}{(-c(x, y)-u(x))}.
\end{equation*}
\end{defin}

\begin{rmk}\label{c-convexity inequality}
If $u$ is $c$-convex, then at any fixed $x_0$ where $u$ and $c$ are differentiable, we can see that for some $y_0\in\calY$,
\begin{align}\label{c-support relation}
u_i(x_0)&=-c_i(x_0, y_0)\notag\\
u_{ij}(x_0)&\geq -c_{ij}(x_0, y_0)
\end{align}
where the second inequality is in the sense of matrices. 
\end{rmk}

\subsubsection{Twisted case}
Now suppose that for each $x_0$ the map $\Sdiff[x]{c(x_0,\cdot)}:\calY \to \cotansp{x_0}{\calX}$ is defined and injective.  In this case we can implicitly define $T(x)$ from a $c$-convex function $u$ as
\begin{equation*}
-u_i(x)=c_i(x, T(x)).  
\end{equation*} Differentiating this and taking the determinant, one obtains

\begin{equation*}
\det{\left(u_{ij}(x)+c_{ij}(x,T(x))\right)} =\lvert\det{c_{i,j}(x, T(x))}\rvert \det{\left(\Jact{i}{j}(x)\right)} 
\end{equation*}
and from this we can write down the elliptic optimal transport equation:
\begin{equation}\label{eqn}
\det{\left(u_{ij}(x)+c_{ij}(x,T(x))\right)}=\lvert\det{c_{i,j}(x, T(x))}\rvert\frac{\mu(x)}{\nu(T(x))}, 
\end{equation}
here $\mu(x)$ and $\nu(y)$ are densities with respect to $dx^1\cdots dx^n$ and $dy^1\cdots dy^n$ for chosen coordinate systems (assuming the measures are absolutely continuous with respect to a coordinate volume form).  This equation is (degenerate) elliptic for a $c$-convex solution $u$ (see Remark ~\ref{c-convexity inequality}), and is invariant under a change of coordinates.
It is a standard result that $c$-convex solutions of the above equation determine uniquely the solution to the optimal transportation problem. 

\subsection{$\S^n$ with Euclidean cost} 

For the remainder of the paper, we specialize to the case
\begin{equation*}
c(x, y):=\cost{x}{y},
\end{equation*}where $\vert\cdot\rvert$ is the Euclidean distance on $\R^{n+1}$. 

In the twisted case, it is clear that each $c$-convex function determines a single-valued map $T(x)$.  This is no longer the case for this cost function on the sphere:  For a fixed $x_0\in\S^n$, the map $\Sdiff[x]{c(x_0,\cdot)}:\S^n \to \cotansp{x_0}{\S^n}$ is not injective. In fact, if $p\in\cotansp{x_0}{\S^n}$, $\gnorm{p}<1$, there are exactly two points $y=y_1$ and $y=y_2$ such that $\Sdiff[x]{c(x_0, y)}=-p$.  With this in mind, we make the following definition.

\begin{defin}\label{branch of inverse}
We define the map $\Yplus(x, p)$ implicitly by
\begin{align*}
\Sdiff[x]{c(x, \Yplus(x, p))}&=-p\\
\inner{x}{\Yplus(x, p)}>0
\end{align*}
for $p\in\cotansp{x}{\S^n}$, $\gnorm{p}<1$.
\end{defin}
Now for $u$ $c$-convex with $\funcnorm{Du}<1,$ we use $\Yplus$ in place of the implicit definition of $T(x)$  in~\eqref{eqn}  to define an elliptic equation
\begin{equation}\label{eqn2}
\det{w_{ij}(x)}=\lvert\det{c_{i,j}(x, \Tplus(x))}\rvert\frac{\mu(x)}{\nu(\Tplus(x)))}, 
\end{equation} 
where 
\begin{equation}\label{notation: w_ij}
w_{ij}(x):=u_{ij}(x)+c_{ij}(x,\Tplus(x))
\end{equation}
and
\begin{equation*}
\Tplus(x) :=\Yplus(x, \Sdiff{u(x)})
\end{equation*}
which is well-defined as long as $\funcnorm{Du}<1$. 
Eventually, we will show that under appropriate conditions on the densities, solutions of this equation indeed determine single-valued solutions to the optimal transport problem.  

\begin{rmk}\label{rmk: implicit definition of T}
If $u$ is $c$-convex and differentiable at $x_0$  with $\gnorm{Du(x_0)}<1$, and $(x_0, y_0)$ is a pair of points satisfying~\eqref{c-support relation}, it is easy to see that either $\inner{x_0}{y_0}>0$ or there exists another point $y_1$ also satisfying~\eqref{c-support relation} with $\inner{x_0}{y_1}>0$. In the latter case $c_{ij}(x_0, y_1)>c_{ij}(x_0, y_0)$. Hence, we see that: 
\begin{align*}
&\text{the pair }(x_0, \Tplus(x_0))\text{ satisfies}~\eqref{c-support relation},\\
&w_{ij}(x_0)\text{ is positive semidefinite}.
\end{align*}
\end{rmk}
\section{Calculation of MTW tensor}\label{section: MTW calculation}
In this section, we utilize a coordinate system specialized to this problem on the sphere to calculate various quantities involving $c$, most notably the MTW tensor of \cite{MTW05}.

We will define a coordinate system centered around a point $x_0\in\S^n$ as follows. First, rotate $x_0$ so it is given by $e_{n+1}$, then take coordinates for the upper hemisphere by representing it as the graph $(x, \factor{x})$ over $B_1(0)\subset \R^n$, where
\begin{equation*}\label{definition of beta}
\factor{x}:=\sqrt{1-\norm{x}^2}.
\end{equation*}
Note that we leave ourselves the freedom to further rotate $\S^n$ as long as the $e_{n+1}$ direction remains unchanged. 
\begin{defin}\cite[Section 2]{MTW05}
Given $V$, $W\in \tansp{x}{\S^n}$ and $\eta$, $\zeta\in \cotansp{x}{\S^n}$, define
\begin{equation*}\label{MTW}
\MTW{i}{j}{k}{l}(x, y)V^iW^j\eta_k\zeta_l:=\MTWcoord{i}{j}{k}{l}(x, y)V^iW^j\eta_k\zeta_l.
\end{equation*}
We will say that a cost $c$ 
has the property~\eqref{A3s} at $(x, y)\in\StimesS$ with constant $\delta_0>0$ if
\begin{equation}\label{A3s}\tag{A3s}
\MTW{i}{j}{k}{l}(x, y)V^iV^j\eta_k\eta_l\geq \delta_0 \gnorm{V}^2\gnorm{\eta}^2.
\end{equation}
\end{defin}

The following is widely known, but we carry out the calculations in our coordinate system here for later reference.
\begin{prop}\label{prop: c is A3s}
The cost $\cost{x}{y}$ satisfies~\eqref{A3s} with a uniform constant $\delta_0=1$ for all $(x, y)\in\StimesS$ such that $\dist{x}{y}>0$.
\end{prop}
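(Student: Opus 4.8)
The plan is to work entirely in the graph coordinates $(x,\beta(x))$ centered at $x_0 = e_{n+1}$, compute $c(x,y) = \tfrac12|x-y|^2$ as a function of the two coordinate patches, and then assemble the MTW expression $\MTWcoord{i}{j}{k}{l}$ explicitly. First I would record that in $\R^{n+1}$, $\tfrac12|x-y|^2 = 1 - \inner{x}{y}$ on $\StimesS$, so up to an additive constant the cost is simply $c = -\inner{x}{y}$; this means every derivative of $c$ in the ambient picture is linear, and the only nonlinearity enters through the embedding $x\mapsto(x,\beta(x))$. So I would compute $c_i$, $c_{i,j}$, $c_{ij}$, $c_{ij,k}$, and $c_{ij,kl}$ by differentiating $-\inner{(x,\beta(x))}{(y,\beta(y))} = -\inner{x}{y} - \beta(x)\beta(y)$, using $\partial_i\beta = -x_i/\beta$ and the resulting second derivatives. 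The key simplification is to then evaluate everything \emph{at the center} $x = 0$ (i.e.\ at $x_0$), exploiting the residual rotational freedom in the $e_{n+1}^\perp$ directions to further diagonalize; by the coordinate-invariance of the MTW condition this suffices to prove the pointwise inequality at an arbitrary point, since $x_0$ was arbitrary.

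The main computation is the $(x,x)$-Hessian of the map $y\mapsto c_{ij}(x,y)$ corrected by the $c^{i,j}c_{i,pq}$ term — i.e.\ the cross-curvature/MTW tensor. At $x = 0$ one has $\beta(0) = 1$, $\partial_i\beta(0) = 0$, $\partial_i\partial_j\beta(0) = -\delta_{ij}$, which collapses most terms: $c_{i,j}(0,y)$ becomes essentially $-\delta_{ij}$ plus a rank-one piece in $y$, and $c_{ij}(0,y)$ is governed by $\partial_i\partial_j\beta(0)\,\beta(y) = -\delta_{ij}\beta(y)$. I expect that after substituting, the quantity $\MTW{i}{j}{k}{l}(0,y)V^iV^j\eta_k\eta_l$ reduces to $\gnorm{V}^2\gnorm{\eta}^2$ times a factor that is manifestly $\geq 1$ whenever $\inner{x}{y} = \beta(y) > 0$ — with equality exactly in the limit $\inner{x}{y}\to 0$, which is why the hypothesis $\dist{x}{y}>0$ appears and why the constant is $\delta_0 = 1$. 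The metric normalization matters here: $\gnorm{V}^2 = |V|^2$ at the center since $\g_{ij}(0) = \delta_{ij}$ in these coordinates, so I can freely pass between the Euclidean norm of the coordinate vector and the intrinsic norm.

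The main obstacle is purely bookkeeping: the full fourth-order expression $c_{ij,pq} - c_{ij,r}c^{r,s}c_{s,pq}$ has many terms before specializing, and one must be careful that the contractions $c^{p,k}c^{q,l}$ raising indices are computed with the correct (non-identity, away from the center) inverse matrix $c^{i,j}$, and that derivatives in $x$ versus derivatives in $y$ are not conflated — the notation $c_{ij,kl}$ means two $x$-derivatives then two $y$-derivatives. I would manage this by deferring the $x=0$ substitution as long as possible only for the terms that survive, and checking the final scalar factor against the known Loeper-type computation for the restricted-Euclidean cost, which is expected to yield a clean closed form in $\beta(y)$ (equivalently in $\inner{x}{y}$). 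Once that factor is exhibited and seen to be $\geq 1$ on the stated set, the proposition follows, and the uniformity of $\delta_0 = 1$ over all admissible $(x,y)$ is immediate from the formula.
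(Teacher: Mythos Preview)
Your approach is correct and essentially identical to the paper's: graph coordinates centered at $x_0$, differentiate $c = 1 - \inner{x}{y} - \beta(x)\beta(y)$, evaluate at $x=0$ (where in fact $c_{i,k}(0,y) = -\delta_{ik}$ exactly with no rank-one piece, and $c_{i,kl}(0,y)=0$ so the subtracted term in the MTW expression vanishes outright), and read off the lower bound $\gnorm{V}^2\gnorm{\eta}^2$. One small correction to your expectations: the bound $\delta_0 = 1$ is \emph{not} saturated as $\inner{x}{y}\to 0$ --- there the factor $1/\beta(y)$ blows up --- but rather when $y\to x$ and $\eta\perp y$; the hypothesis $\inner{x}{y}>0$ is present so that $y$ lies in the same coordinate hemisphere and $\beta(y)>0$, keeping the expression well-defined.
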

\begin{proof}
We will utilize the coordinate system indicated above, and calculate various derivatives of $c$. First,
\begin{align*}
c(x, y)&=\cost{(x, \factor{x})}{(y, \factor{y})}\\
&=\frac{\norm{x-y}^2+(\factor{x}-\factor{y})^2}{2}\\
&=\frac{\norm{x}^2+\norm{y}^2-2\inner{x}{y}+(1-\norm{x}^2+1-\norm{y}^2-2\factor{x}\factor{y})}{2}\\
&=1-\inner{x}{y}-\factor{x}\factor{y}.
\end{align*}
Thus we calculate, at generic $x$ and $y$,
\begin{align}\label{derivatives of c}
c_i&=-y_i-\factorsub{x}{i}\factor{y}\notag\\
c_{ij}&=-\factorsub{x}{ij}\factor{y}\notag\\
c_{ij, k}&=-\factorsub{x}{ij}\factorsub{y}{k}\notag\\
c_{i,k}&=-\delta_{ik}-\factorsub{x}{i}\factorsub{y}{k}\notag\\
c_{i, kl}&=-\factorsub{x}{i}\factorsub{y}{kl}\notag\\
c_{ij, kl}&=-\factorsub{x}{ij}\factorsub{y}{kl}
\end{align}
and
\begin{align*}
\factorsub{x}{i}&=-\frac{x_i}{\factor{x}}\notag\\
\factorsub{x}{ij}&=-\frac{\delta_{ij}}{\factor{x}}-\frac{x_ix_j}{\factor{x}^3}.
\end{align*}
Thus we find at $x=0$,
\begin{align}\label{derivatives of c at zero}
c_i&=-y_i\notag\\
c_{ij}&=\delta_{ij}\factor{y}\notag\\
c_{ij, k}&=-\delta_{ij}\frac{y_k}{\factor{y}}\notag\\
c_{i,k}&=-\delta_{ik}\notag\\
c_{i, kl}&=0\notag\\
c_{ij, kl}&=-\delta_{ij}\left(\frac{\delta_{kl}}{\factor{y}}+\frac{y_ky_l}{\factor{y}^3}\right).
\end{align}
Now we can calculate for $V\in \tansp{0}{\S^n}$, $\eta\in \cotansp{0}{\S^n}$,
\begin{align*}
\MTW{i}{j}{k}{l}V^iV^j\eta_k\eta_l&=\MTWcoord{i}{j}{k}{l}V^iV^j\eta_k\eta_l\notag\\
&=\delta_{ij}\left(\frac{\delta_{pq}}{\factor{y}}+\frac{y_py_q}{\factor{y}^3}\right)(-\delta^{pk})(-\delta^{ql})V^iV^j\eta_k\eta_l\notag\\
&=\frac{\norm{V}^2}{\factor{y}}\left(\delta_{kl}+\frac{y_ky_l}{\factor{y}^2}\right)\eta_k\eta_l\notag\\
&\geq\norm{V}^2\norm{\eta}^2\\
&=\gnorm{V}^2\gnorm{\eta}^2
\end{align*}
since 
\begin{align*}
\sum_{k,l}y_k\eta_ky_l\eta_l&=\inner{y}{\eta}^2\geq 0,\\
\factor{y}&\leq 1
\end{align*}
for any $y$ and $\eta$. The last equality is seen from calculation of the metric in our coordinates, shown below.
\end{proof}

We also make a few calculations for later use. In the above coordinates, 
\begin{align*}
\g_{ij}
&=\delta_{ij}+\factorsub{x}{i}\factorsub{x}{j}\\
(\g_{ij})_k&=\factorsub{x}{ik}\factorsub{x}{j}+\factorsub{x}{i}\factorsub{x}{jk}\\
(\g_{ij})_{kl}&=\factorsub{x}{ikl}\factorsub{x}{j}+\factorsub{x}{ik}\factorsub{x}{jl}+\factorsub{x}{il}\factorsub{x}{jk}+\factorsub{x}{i}\factorsub{x}{jkl}
\end{align*}
and at $x=0$ we have
\begin{align*}
\g_{ij}&=\delta_{ij}\notag\\
(\g_{ij})_k&=0\notag\\
(\g_{ij})_{kl}&=\delta_{ik}\delta_{jl}+\delta_{il}\delta_{jk}.
\end{align*}
Hence at $x=0$,
\begin{align}\label{dervatives of the metric}
\g^{ij}&=\delta_{ij}\notag\\
(\g^{ij})_k&=0\notag\\
(\g^{ij})_{kl}&=-\delta_{ik}\delta_{jl}-\delta_{il}\delta_{jk}.
\end{align}
\begin{rmk}\label{tplus in coordinates}
Suppose we have a $C^1$, $c$-convex function $u$. Then, when written in a coordinate system chosen as above centered at $x_0$, using the implicit relation in Definition~\ref{branch of inverse} we notice that 
\begin{equation*}
\left(\Tplus(x_0)\right)^i=u_i(x_0).
\end{equation*}
\end{rmk}

\section{Gradient estimate : Theorem 1.1}\label{section: gradient estimate}
In this section, we will prove an {\em a priori} gradient estimate for a solution $u$ of our elliptic equation~\eqref{eqn2}. A gradient bound of the form $\funcnorm{\Sdiff{u}}<1-\epsilon$ will ensure that Proposition~\ref{prop: c is A3s} is applicable at $(x, \Tplus(x))$, hence we can use the MTW theory to to obtain {\em a priori} second derivative estimates for $u$. The method we use is similar to that of Delano{\"e} and Loeper in \cite{DL06}, where the cost is the geodesic distance squared. In~\eqref{eqn2} we will take
\begin{align*}
\mu(x)&:=\rhoone[(x)]\\
\nu(y)&:=\rhobar[(y)].
\end{align*}
\begin{thm}\label{thm: stay away property}
Let $n\geq 2$. Suppose $u$ is a $C^2$ solution to equation~\eqref{eqn2} such that 
\begin{align*}
\funcnorm{\Sdiff{u}}&<1.
\end{align*}
Then, 
\begin{equation*}
\funcnorm{\Sdiff{u}}\leq\left(\frac{1}{n-1}\right)\left(\frac{\Lambdaf}{\lambdag}\right)^{\frac{1}{n-1}}\calD{f}{g}.
\end{equation*}
\end{thm}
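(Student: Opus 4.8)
The plan is to run a maximum–principle argument on $h:=\tfrac12\gnorm{\Sdiff{u}}^2$, in the spirit of Delano{\"e}--Loeper \cite{DL06}, but carried out entirely in the graph coordinates of \autoref{section: MTW calculation}, so that the curvature of $\S^n$ enters only through the derivatives of $\g$ recorded in~\eqref{dervatives of the metric}. If $\Sdiff{u}\equiv 0$ the estimate is trivial; otherwise $h$ attains its maximum on the compact manifold $\S^n$ at some $x_0$, with $a:=\funcnorm{\Sdiff{u}}=\sqrt{2h(x_0)}\in(0,1)$. I would fix the coordinate chart centered at $x_0$ and use the remaining rotational freedom to put $\Sdiff{u}(x_0)$ in the $e_1$ direction, so that $\g_{ij}(0)=\delta_{ij}$ and, by Remark~\ref{tplus in coordinates}, $\Tplus(x_0)=(a,0,\dots,0,\sqrt{1-a^2})$; in particular the target sits at the \emph{off-center} point $(a,0,\dots,0)\in B_1(0)$ of this same chart. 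The first-order condition $\partial_l h(0)=0$ gives $u_{1l}(0)=0$ for every $l$, which together with $c_{1l}(0,\Tplus(0))=\delta_{1l}\sqrt{1-a^2}$ makes $w_{ij}(0)$ block diagonal: equal to $\sqrt{1-a^2}$ along $e_1$, and equal to a positive-definite $(n-1)\times(n-1)$ block $w'$ on the orthogonal complement (positive definite because $w_{ij}(x_0)\ge 0$ by Remark~\ref{rmk: implicit definition of T}, while $\det w_{ij}(x_0)>0$ by~\eqref{eqn2}).

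Since $w^{ij}(0)>0$, the maximum gives $w^{kl}(0)\partial_k\partial_l h(0)\le 0$. Expanding $\partial_k\partial_l h$ in these coordinates, the metric–second–derivative term from~\eqref{dervatives of the metric} contributes $-u_ku_l$, there is a term $a\,\partial_1 u_{kl}(0)$, and a manifestly nonnegative term $\sum_i u_{il}u_{ik}$ that I would discard. For the middle term I would write $u_{kl}=w_{kl}-c_{kl}(x,\Tplus(x))$, use Jacobi's formula together with~\eqref{eqn2} to replace $w^{kl}\partial_1 w_{kl}=\partial_1\log\det w$ by $\partial_1\bigl[\log\lvert\det c_{i,j}(x,\Tplus(x))\rvert+\log\mu(x)-\log\nu(\Tplus(x))\bigr]$, and expand $\partial_1 c_{kl}(x,\Tplus(x))$ by the chain rule together with the Jacobian identity $D\Tplus(x_0)=(w_{ij}(0))$ (obtained by differentiating $-u_i=c_i(x,\Tplus(x))$ and invoking~\eqref{derivatives of c at zero}). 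Feeding in~\eqref{derivatives of c},~\eqref{derivatives of c at zero} and the metric data both at $x=0$ and at $\Tplus(0)$, the several $\tfrac{a}{\sqrt{1-a^2}}$ contributions — coming from $\det c_{i,j}$, from $\sqrt{\det\g}$ inside $\nu$ evaluated at $\Tplus(0)$, and from $\partial_1 c_{kl}$ — cancel against one another, and the second-order inequality collapses to $a^2\,\tr w^{-1}(0)\le \tfrac{a^2}{\sqrt{1-a^2}}+a\,f_1(0)-a\sqrt{1-a^2}\,g_1(\Tplus(0))$. Bounding $\lvert f_1(0)\rvert\le\funcnorm{\Sdiff{f}}$ and, via $\g_{11}(\Tplus(0))=1/(1-a^2)$, $\bigl\lvert\sqrt{1-a^2}\,g_1(\Tplus(0))\bigr\rvert\le\gnorm{\Sdiff{g}(\Tplus(0))}\le\funcnorm{\Sdiff{g}}$, this yields
\begin{equation*}
a^2\,\tr w^{-1}(0)\;\le\;\frac{a^2}{\sqrt{1-a^2}}+a\calD{f}{g}.
\end{equation*}

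Finally I would bound $\tr w^{-1}(0)$ from below. From the block structure and AM--GM applied to the eigenvalues of $w'$,
\begin{equation*}
\tr w^{-1}(0)=\frac{1}{\sqrt{1-a^2}}+\tr (w')^{-1}\;\ge\;\frac{1}{\sqrt{1-a^2}}+(n-1)\,(\det w')^{-1/(n-1)},
\end{equation*}
while~\eqref{eqn2} at $x_0$ gives $\det w'=\det w(0)/\sqrt{1-a^2}=e^{f(0)-g(\Tplus(0))}\le \Lambdaf/\lambdag$. Substituting into the previous inequality and cancelling the common term $\tfrac{a^2}{\sqrt{1-a^2}}$ leaves $a^2(n-1)(\lambdag/\Lambdaf)^{1/(n-1)}\le a\calD{f}{g}$, that is,
\begin{equation*}
\funcnorm{\Sdiff{u}}=a\;\le\;\frac{1}{n-1}\left(\frac{\Lambdaf}{\lambdag}\right)^{\frac{1}{n-1}}\calD{f}{g};
\end{equation*}
here the hypothesis $n\ge 2$ is used so that $n-1\ge 1$.

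The step I expect to be the main obstacle is the bookkeeping in the second paragraph: one must keep straight the two distinct packages of derivative data — those of $c$ and $\g$ at $x=0$, which are clean (\eqref{derivatives of c at zero},~\eqref{dervatives of the metric}), and those at the off-center target $\Tplus(0)=(a,0,\dots,0)$, where $\g$, $\det\g$ and $\det c_{i,j}$ all have nontrivial first derivatives — and verify that every $\tfrac{a}{\sqrt{1-a^2}}$-type term really does cancel, so that the inequality closes with the sharp constant rather than a lossy one. A related subtlety is the comparison between the coordinate component $g_1(\Tplus(0))$ and the intrinsic length $\gnorm{\Sdiff{g}(\Tplus(0))}$ at that off-center point; pinning down the weight $\g_{11}(\Tplus(0))=1/(1-a^2)$ is exactly what produces the clean $\calD{f}{g}$ on the right. (The pointwise computation involves third derivatives of $u$; as is standard for such estimates one runs it on a smooth solution, and the resulting bound depends on $u$ only through $\funcnorm{\Sdiff{u}}$.)
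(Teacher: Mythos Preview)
Your argument is essentially identical to the paper's: both maximize $\tfrac12\gnorm{\Sdiff u}^2$, work in the graph chart centered at the maximizer with $e_1$ aligned to $\Sdiff u$, use the first-order condition to make $w_{ij}(0)$ block-diagonal with $w_{11}(0)=\sqrt{1-a^2}$, differentiate~\eqref{eqn2} in $x_1$ to substitute for $w^{ij}u_{ij1}$, observe the cancellation of the $\tfrac{a}{\sqrt{1-a^2}}$ terms, and close with AM--GM on $\sum_{\alpha>1}w^{\alpha\alpha}$ together with $\det w'=e^{f(0)-g(\Tplus(0))}$. The only discrepancy is a harmless sign flip on $f_1$ and $g_1$ in your displayed intermediate inequality (the paper obtains $-u_1f_1+u_1g_1\beta$ on the right-hand side), which disappears once you pass to absolute values in the very next line.
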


\begin{proof}
Define
\begin{equation*}
\phi(x):=\frac{\gnorm{\Sdiff{u(x)}}^2}{2}
\end{equation*}
where $u$ is a solution to the equation~\eqref{eqn2}. Now we let $x_0$ be the point where $\phi$ achieves its maximum on $\S^n$, and take the coordinate system defined in \autoref{section: MTW calculation} centered at this point. We take $dx^1$ in the direction of $\Sdiff{u}$, and rotate the remaining $n-1$ directions so that $u_{ij}$ for $1<i, j\leq n$ is diagonal at $x_0=$. Note that at $x=0$,
\begin{align*}
0&=\phi_i\notag\\
&=\frac{(\g^{pq})_iu_pu_q+2\g^{pq}u_pu_{qi}}{2}\notag\\
&=u_1u_{1i}
\end{align*}
and
\begin{align*}
\phi_{ij}&=\frac{(\g^{pq})_{ij}u_pu_q+2(\g^{pq})_iu_pu_{qj}+2(\g^{pq})_ju_pu_{qi}+2\g^{pq}u_pu_{qij}+2\g^{pq}u_{pj}u_{qi}}{2}\notag\\
&=u_1^2\frac{(\g^{11})_{ij}}{2}+u_1u_{1ij}+\sum_pu_{pi}u_{pj}\notag\\
&=-u_1^2\delta_{i1}\delta_{j1}+u_1u_{1ij}+\sum_pu_{pi}u_{pj}.
\end{align*}
Here we have used~\eqref{dervatives of the metric}.
Now if $u_1(0)=0$, that implies that $\funcnorm{\Sdiff{u}}=0$ and $u$ is constant. Thus we may assume $u_1(0)\neq 0$ and hence 
\begin{equation*}
u_{1i}(0)=0
\end{equation*}
for all $1\leq i \leq n$. In particular, the whole matrix $u_{ij}$ is diagonal at $0$.

Consider the operator
\begin{equation}\label{linearization}
Lv:=w^{ij}v_{ij}
\end{equation}
which is the second order part of the linearization of the natural logarithm of~\eqref{eqn2}.
Taking $v=\phi$ and $x=0$, and applying the maximum principle we find that
\begin{align}\label{Lphi}
0&\geq L\phi(0)\notag\\
&=w^{ij}\phi_{ij}\notag\\
&=w^{ij}(-u_1^2\delta_{i1}\delta_{j1}+u_1u_{1ij}+\sum_pu_{pi}u_{pj})\notag\\
&=-u_1^2w^{11}+u_1w^{ij}u_{1ij}+w^{ij}\sum_pu_{pi}u_{pj}\notag\\
&=-\frac{u_1^2}{\factor{\Sdiff{u}}}+\sum_\alpha w^{\alpha\alpha}(u_1u_{1\alpha\alpha}+u_{\alpha\alpha}^2).
\end{align}
Here we have used~\eqref{derivatives of c at zero} and the fact that $u_{ij}$ is diagonal at $0$.

By differentiating the implicit relation
\begin{equation*}
u_i(x)+c_i(x, \Tplus(x))=0
\end{equation*}
we find that 
\begin{align*}
u_{ij}+c_{ij}&=-c_{i, k}(\Tplus)^k_j.
\end{align*}
Thus from~\eqref{derivatives of c at zero} and Remark~\ref{tplus in coordinates},
\begin{align*}
(\Tplus)^i_j(0)&=u_{ij}(0)+\delta_{ij}\factor{\Tplus(0)}\\
&=u_{ij}(0)+\delta_{ij}\factor{\Sdiff{u(0)}}.
\end{align*}
In particular, $(\Tplus)^i_j(0)$ is diagonal. Note additionally, from this we find that
\begin{equation}\label{w inverse}
w^{ij}(0)=\frac{1}{u_{ij}(0)+\delta_{ij}\factor{\Sdiff{u(0)}}}.
\end{equation}
Also, from~\eqref{derivatives of c} we calculate
\begin{equation*}
c_{ij1}(0, y)=-\factorsub{0}{ij1}\factor{y}=0.
\end{equation*}
By the calculations at the end of \autoref{section: MTW calculation}, we have
\begin{align*}
\rhoone[(x)]&=\frac{e^{f(x)}}{\factor{x}}\\
\rhobar[{(\Tplus(x))}]&=\frac{e^{g(\Tplus(x))}}{\factor{\Tplus(x)}}.
\end{align*}
We now differentiate equation~\eqref{eqn2} (after taking logarithms again) in the $x_1$ direction to obtain, at $x=0$,
\begin{align*}
0&=w^{ij}(u_{ij1}+c_{ij1}+c_{ij, k}(\Tplus)^k_1)-c^{i, j}(c_{i1, j}+c_{i, jk}(\Tplus)^k_1)\\
&\qquad\qquad\qquad-\Dx[1](f-\log{\factornon})+\Dx[1](g\circ \Tplus-\log{\factor{\Tplus}})\\
&=\sum_\alpha [w^{\alpha\alpha}(u_{\alpha\alpha 1}+c_{\alpha\alpha, 1}(\Tplus)^1_1)+c_{\alpha 1, \alpha}]\\
&\qquad\qquad\qquad-f_1+\frac{\factorsubnon{1}}{\factornon{}}+(g_k\circ\Tplus)(\Tplus)^k_1-\frac{\factorsub{\Tplus}{k}(\Tplus)^k_1}{\factor{\Tplus}}\\
&=\sum_\alpha [w^{\alpha\alpha}(u_{\alpha\alpha 1}+c_{\alpha\alpha, 1}(\Tplus)^1_1)+c_{\alpha 1, \alpha}]\\
&\qquad\qquad\qquad-f_1+\frac{\factorsubnon{1}}{\factornon{}}+(g_1\circ\Tplus)(\Tplus)^1_1-\frac{\factorsub{\Tplus}{1}(\Tplus)^1_1}{\factor{\Tplus}}\\
&=\sum_\alpha [w^{\alpha\alpha}(u_{\alpha\alpha 1}-u_1)]-\frac{u_1}{\factor{\Sdiff{u}}}\\
&\qquad\qquad\qquad-f_1+g_1(\Sdiff{u})\factor{\Sdiff{u}}+\frac{u_1}{\factor{\Sdiff{u}}}.
\end{align*}
Substituting into~\eqref{Lphi}, we obtain
\begin{align*}
0&\geq-\frac{u_1^2}{\factor{\Sdiff{u}}}+\sum_\alpha w^{\alpha\alpha}u_{\alpha\alpha}^2+u_1\left(\sum_\alpha w^{\alpha\alpha}u_1+f_1-g_1(\Sdiff{u})\factor{\Sdiff{u}}\right)\\
&=-\frac{u_1^2}{\factor{\Sdiff{u}}}+\sum_\alpha w^{\alpha\alpha}u_{\alpha\alpha}^2+u_1^2\sum_\alpha w^{\alpha\alpha}+u_1f_1-u_1g_1(\Sdiff{u})\factor{\Sdiff{u}}\\
&\geq u_1^2\sum_\alpha w^{\alpha\alpha}+u_1f_1-u_1g_1(\Sdiff{u})\factor{\Sdiff{u}}-\frac{u_1^2}{\factor{\Sdiff{u}}}\\
&=u_1^2\sum_{\alpha>1} w^{\alpha\alpha}+u_1f_1-u_1g_1(\Sdiff{u})\factor{\Sdiff{u}}.
\end{align*}
Here we have used that $w^{\alpha\alpha}(0)\geq 0$ and $w^{11}(0)=\frac{1}{\factor{\Sdiff{u(0)}}}$ by~\eqref{w inverse}. Thus we find that
\begin{align*}
\calD{f}{g}
&\geq\left\vert f_1\right\vert+\left\vert g_1(\Sdiff{u})\factor{\Sdiff{u}}\right\vert\\
&\geq u_1\sum_{\alpha>1} w^{\alpha\alpha}\\
&\geq u_1(n-1)(\prod_{\alpha>1}{w^{\alpha\alpha}})^{\frac{1}{n-1}}\\
&=(n-1)u_1\left(\frac{e^{g(\Sdiff{u})}}{e^{f}}\right)^{\frac{1}{n-1}}\\
&\geq(n-1)u_1\left(\frac{\lambdag}{\Lambdaf}\right)^{\frac{1}{n-1}}\\
&=(n-1)\left(\frac{\lambdag}{\Lambdaf}\right)^{\frac{1}{n-1}}\funcnorm{\Sdiff{u}}.
\end{align*}
Hence
\begin{equation*}
\funcnorm{\Sdiff{u}}\leq\left(\frac{1}{n-1}\right)\left(\frac{\Lambdaf}{\lambdag}\right)^{\frac{1}{n-1}}\calD{f}{g}.
\end{equation*}
\end{proof}

\section{Gradient estimate : Theorem 1.2 }\label{section: Wass2grad}

We will prove the following interior estimate on Euclidean space itself and
then show that it adapts easily to hold on embedded spheres as well.

\begin{thm}
Suppose that $\lvert T(x_{0})-x_{0}\rvert =a$ , where $T$ is an
optimal transport map from $\rho(x)dx$ to $\bar{\rho}(y)dy$ in $\R^{n}$, with $\rho$, $\bar{\rho}$ supported in domains $\Omega,\bar{\Omega}$ and $d(x_0,\partial\Omega)\geq a$. Then
\[
a\leq\left(  \frac{1}{\min_{x\in\Omega}\rho(x)}\frac{n(n+1)(n+2)}{\Vol[n-2] }\frac{1}{\int_{0}^{\pi/2}\cos^{n+2}\phi\sin^{n-2}\phi
d\phi}W_{2}^{2}(\rho,\bar{\rho})\right)  ^{1/(n+2)}%
\]
where $W_{2}^{2}(\rho,\bar{\rho})$ is the Wasserstein distance between $\rho$
and $\bar{\rho}.$

\end{thm}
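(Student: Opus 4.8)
The plan is to exploit the monotonicity of the Brenier map together with the fact that the squared displacement must integrate to $W_2^2$. Since $\rho\,dx$ is absolutely continuous, Brenier's theorem gives $T=\nabla\psi$ for a convex potential $\psi$, with $W_2^2(\rho,\bar{\rho})=\tfrac12\int_{\R^n}\lvert x-T(x)\rvert^2\rho(x)\,dx$ (using the cost $\tfrac12\lvert x-y\rvert^2$ as elsewhere in the paper), and for every $x$ and a.e.\ $x'$ one has the monotonicity inequality $\inner{T(x)-T(x')}{x-x'}\ge 0$; this is available at our fixed $x_0$ because it holds for any selection $T(x_0)\in\partial\psi(x_0)$.

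First I would record a pointwise lower bound on the displacement in a solid cone issuing from $x_0$. Set $e:=\tfrac1a\bigl(T(x_0)-x_0\bigr)\in\S^{n-1}$, fix a unit vector $\omega$ making polar angle $\phi\in[0,\pi/2)$ with $e$ and a radius $r\in(0,a\cos\phi)$, and put $x:=x_0+r\omega$. Monotonicity applied to the pair $x,x_0$ gives $\inner{T(x)-T(x_0)}{\omega}\ge 0$, hence $\inner{T(x)}{\omega}\ge\inner{x_0}{\omega}+a\cos\phi$, so
\[
\inner{T(x)-x}{\omega}=\inner{T(x)}{\omega}-\inner{x_0}{\omega}-r\ \ge\ a\cos\phi-r\ >\ 0,
\]
and therefore $\lvert T(x)-x\rvert\ge a\cos\phi-r$ by Cauchy--Schwarz. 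Since $r<a\cos\phi\le a\le d(x_0,\partial\Omega)$, every such $x$ lies in $B_a(x_0)\subset\Omega$, where $\rho\ge\min_\Omega\rho$.

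Next I would integrate in polar coordinates around $x_0$ with polar angle $\phi$ measured from $e$, so that the sphere of radius $r$ contributes $\Vol[n-2]\,r^{n-1}\sin^{n-2}\phi\,d\phi\,dr$, and discard everything outside the above cone:
\[
\int_{\R^n}\lvert x-T(x)\rvert^2\rho(x)\,dx\ \ge\ \min_\Omega\rho\cdot\Vol[n-2]\int_0^{\pi/2}\!\!\int_0^{a\cos\phi}(a\cos\phi-r)^2\,r^{n-1}\,dr\,\sin^{n-2}\phi\,d\phi.
\]
The radial integral is elementary: with $R=a\cos\phi$, $\int_0^R(R-r)^2 r^{n-1}\,dr=R^{n+2}\bigl(\tfrac1n-\tfrac2{n+1}+\tfrac1{n+2}\bigr)=\tfrac{2R^{n+2}}{n(n+1)(n+2)}$. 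Substituting $R=a\cos\phi$ and pulling out $a^{n+2}$ gives
\[
W_2^2(\rho,\bar{\rho})\ \ge\ \frac{\Vol[n-2]\,\min_\Omega\rho}{n(n+1)(n+2)}\Bigl(\int_0^{\pi/2}\cos^{n+2}\phi\,\sin^{n-2}\phi\,d\phi\Bigr)\,a^{n+2},
\]
which is exactly the asserted bound after solving for $a$.

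I do not expect a genuine analytic obstacle here; the conceptual step that makes the estimate work is the observation that monotonicity of the optimal map propagates the large displacement at $x_0$ to an entire solid cone of nearby points, so the $W_2^2$ integral cannot be smaller than a quantity of order $a^{n+2}$. The only points needing care are that $T$ is merely defined a.e.\ and need not be continuous — handled by using the subdifferential form of monotonicity, which is valid at the single point $x_0$ and a.e.\ elsewhere, and that is all the integration requires — and the verification that the full cone $\{x_0+r\omega:0<r<a\cos\phi\}$ lies inside $\Omega$, which is precisely what the hypothesis $d(x_0,\partial\Omega)\ge a$ provides.
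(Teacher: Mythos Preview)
Your proof is correct and follows essentially the same approach as the paper: use monotonicity of the optimal map to obtain the pointwise displacement bound $\lvert T(x)-x\rvert\ge a\cos\phi-r$ on the ball $K_a=\{x_0+r\omega:0<r<a\cos\phi\}$ (which you describe as a ``solid cone'' but which is in fact the ball of radius $a/2$ centered at $x_0+\tfrac{a}{2}e$), then integrate in spherical coordinates and evaluate the radial integral. Your presentation is slightly more careful than the paper's in making the Cauchy--Schwarz step explicit and in noting that monotonicity is available at the fixed point $x_0$ via the subdifferential even though $T$ is only defined a.e.
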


\begin{proof}
Rotate coordinates on $\R^{n}$ so that $x_{0}$ is at the origin and $T(x_{0})$ is along the $e_{1}$
axis. Define the set
\[
K_{a}=\left\{  x\in\R^{n}\mid\lvert x\rvert\leq a\cos\phi\right\}
\]
where $\phi$ is the angle $x$ makes with the $e_{1}$ axis. This set is a sphere of radius $a/2$ centered at the point $(a/2,0,...,0)$.  Now take a point
point $x_{1}\in K_{a}$.   The monotonicity condition for optimal transport (cf. \cite[Def 5.1]{Vil09}) says that 
\begin{equation}
 \inner{x_0}{T(x_0)} +\inner{x_1}{T(x_1)} \geq  \inner{x_1}{T(x_0)}  +   \inner{x_0}{T(x_1)}  \label{mcityEuc} 
\end{equation} in particular 
$$ \inner{x_1}{T(x_1)} \geq \inner{x_1}{T(x_0)}= a \cos \phi  . $$  It follows that  $T(x_{1})$ must
be in a half space and that
\[
\lvert x_{1}-T(x_{1})\rvert\geq a\cos\phi-\lvert x_{1}\rvert .
\]

Thus we can integrate
\begin{align*}
W_{2}^{2}(\rho,\bar{\rho}) &  =\int\frac{\lvert x-T(x)\rvert^{2}}{2}\rho(x)dx\geq
\int_{K_{a}}\frac{\rvert x-T(x)\lvert^{2}}{2}\rho(x)dx\\
&  \geq\int_{0}^{\pi/2}\int_{S^{n-2}}\int_{0}^{a\cos\phi}\rho(x)\frac{\left(
a\cos\phi-r\right)  ^{2}}{2}r^{n-1}drd\sigma\sin^{n-2}\phi d\phi\\
&  \geq\min_{x\in\Omega}\rho(x)\frac{\Vol[n-2]}{2}%
\int_{0}^{\pi/2}\int_{0}^{a\cos\phi}\left( a\cos\phi-r\right)  ^{2}%
r^{n-1}drd\sigma\sin^{n-2}\phi d\phi\\
&  =\min_{x\in\Omega}\rho(x)\Vol[n-2] \frac{a^{n+2}%
}{n(n+1)(n+2)}\int_{0}^{\pi/2}\cos^{n+2}\phi\sin^{n-2}\phi d\phi
\end{align*}
and the conclusion follows.
\end{proof}

\begin{cor}\label{cor: wasserstein gradient bound on sphere}
The same estimate holds on the sphere $\S^{n}\subset%
\R^{n+1}$ with cost given by Euclidean distance squared on $\R^{n+1}$.  In particular 
\begin{equation*}
\funcnorm{\Sdiff{u}}\leq \left(  \frac{1}{\min_{x\in\Omega}\rho(x)}\frac{n(n+1)(n+2)}{\Vol[n-2] }\frac{1}{\int_{0}^{\pi/2}\cos^{n+2}\phi\sin^{n-2}\phi
d\phi}W_{2}^{2}(\rho,\bar{\rho})\right)  ^{1/(n+2)}.
\end{equation*}
\end{cor}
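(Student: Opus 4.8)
The plan is to transplant the proof of the theorem above onto the embedded sphere essentially verbatim, using that on $\S^{n}\subset\R^{n+1}$ the cost $\cost{x}{y}$ is exactly the restriction of the Euclidean cost of the ambient $\R^{n+1}$: thus the monotonicity of the optimal plan and the half-space argument it yields take place in $\R^{n+1}$ and are insensitive to the constraint $|x|=1$. First I would fix a point $x_{0}\in\S^{n}$ where $\gnorm{\Sdiff{u(x)}}$ is maximal, let $T=\Tplus$ be the optimal transport map, set $a:=|T(x_{0})-x_{0}|$, and --- after a rotation of $\R^{n+1}$ and the translation $x\mapsto x-x_{0}$ --- assume $x_{0}=0$ and $T(x_{0})=ae_{1}$. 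Since the monotonicity inequality~\eqref{mcityEuc} is translation invariant, for every $x_{1}\in\S^{n}$ it gives $\inner{x_{1}-x_{0}}{T(x_{1})-x_{0}}\geq a\cos\phi\,|x_{1}-x_{0}|$, where $\phi=\angle\bigl(x_{1}-x_{0},\,T(x_{0})-x_{0}\bigr)$ measured in $\R^{n+1}$; hence $T(x_{1})$ lies in a half-space at distance $a\cos\phi-|x_{1}-x_{0}|$ from $x_{1}$, so
\[
|x_{1}-T(x_{1})|\ \geq\ a\cos\phi-|x_{1}-x_{0}|.
\]
Taking $K_{a}$ to be the intersection with $\S^{n}$ of the Euclidean ball of radius $a/2$ through $x_{0}$ and $T(x_{0})$, this lower bound is nonnegative on $K_{a}$.

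Then, exactly as in the theorem,
\[
W_{2}^{2}(\rho,\bar{\rho})\ \geq\ \int_{K_{a}}\frac{|x-T(x)|^{2}}{2}\rho(x)\,dVol\ \geq\ \min_{x}\rho(x)\int_{K_{a}}\frac{(a\cos\phi-|x-x_{0}|)^{2}}{2}\,dVol .
\]
To evaluate the last integral I would use the graph chart of~\autoref{section: MTW calculation} centred at $x_{0}$, in which the Riemannian volume element is $\factor{p}^{-1}\,dp$ and $|x-x_{0}|$, $\cos\phi$, and $K_{a}$ are explicit functions of $p\in B_{1}(0)$; passing to spherical coordinates $(r,\phi,\sigma)$ on $\R^{n}$ adapted to the $e_{1}$-direction, the radial integral produces the Beta-function constant $\tfrac{2}{n(n+1)(n+2)}$ and the angular integral produces $\Vol[n-2]\int_{0}^{\pi/2}\cos^{n+2}\phi\sin^{n-2}\phi\,d\phi$, just as in $\R^{n}$, so the same bound on $a$ results. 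Finally, to reach the stated form, invoke Remark~\ref{tplus in coordinates}: in the chart centred at an arbitrary $x$ one has $\gnorm{\Sdiff{u(x)}}^{2}=\sum_{i}u_{i}(x)^{2}$ while the ambient point $\Tplus(x)=(u_{1}(x),\dots,u_{n}(x),\factor{\Sdiff{u(x)}})$ satisfies $|\Tplus(x)-x|^{2}=\gnorm{\Sdiff{u(x)}}^{2}+\bigl(\factor{\Sdiff{u(x)}}-1\bigr)^{2}\geq\gnorm{\Sdiff{u(x)}}^{2}$; hence $\funcnorm{\Sdiff{u}}=\gnorm{\Sdiff{u(x_{0})}}\leq|\Tplus(x_{0})-x_{0}|=a$, which is controlled by the right-hand side.

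The one step that is not a pure transcription is the volume computation of the second paragraph: unlike in $\R^{n}$, the cap $K_{a}$ and the density $\factor{p}^{-1}$ are only approximately the flat model, differing from it by curvature corrections of size $O(a^{2})$, and one must check these do not enlarge the constant --- for instance by tracking their sign, or by exploiting that $\gnorm{\Sdiff{u(x_{0})}}$ is in fact strictly smaller than the chord $|\Tplus(x_{0})-x_{0}|=a$, which leaves room to absorb them. Everything else --- the monotonicity inequality~\eqref{mcityEuc}, the half-space containment, and the identity relating $\gnorm{\Sdiff{u}}$ to the ambient displacement --- carries over directly from the Euclidean argument and from~\autoref{section: MTW calculation}.
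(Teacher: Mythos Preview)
Your overall strategy --- ambient monotonicity, half-space inclusion, integrate the resulting lower bound over a region --- is exactly the paper's. The difference is entirely in how you handle the curved geometry in the integration step, and this is precisely where you flag an unresolved issue.

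The paper dispenses with your ``curvature corrections'' concern by projecting to the tangent plane at $x_0$ \emph{before} running the argument, rather than after. Concretely: take the graph chart at $x_0$, and let $a_{\mathrm{proj}}$ be the length of the \emph{tangential} component of $T(x_0)-x_0$, so that $a_{\mathrm{proj}}=\gnorm{\Sdiff{u(x_0)}}$ exactly (not just $\leq a$). One then checks three one-line inequalities, all with the favorable sign:
(i) projected chord lengths in $\R^n$ are bounded above by the ambient chord lengths in $\R^{n+1}$, so $|x-T(x)|^2$ dominates the projected $|p-q|^2$;
(ii) $dVol_{\S^n}=\beta(p)^{-1}dp\geq dp$;
(iii) the monotonicity for the sphere cost, written in these coordinates, is at least as strong as the flat Euclidean inequality~\eqref{mcityEuc}.
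With these, the integration over the \emph{projected} set $K_{a_{\mathrm{proj}}}\subset\R^n$ is literally the flat computation of the theorem, with $a$ replaced by $a_{\mathrm{proj}}=\gnorm{\Sdiff{u(x_0)}}$, and the stated constant drops out without any $O(a^2)$ bookkeeping.

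Your route --- keeping the ambient $a$ and the spherical cap $K_a\subset\S^n$, then parametrizing by the chart --- forces you to compare the curved integral to the flat one, and this is where your proof is currently incomplete: the integrand $(a\cos\phi-|x-x_0|)^2$ and the domain $K_a$ both involve the ambient angle $\phi$ and the chord $|x-x_0|$, which in chart coordinates are nontrivial functions of $p$ (since $T(x_0)-x_0$ has a normal component). The sign-tracking you propose is essentially (i)--(iii) above, but carried out after the fact it is messier; done at the outset via projection, as the paper does, the corrections never appear.
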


\begin{proof}
Represent a hemisphere as a graph over the tangent space at $x_{0}.$ Then
using these coordinates, repeat the above calculation. First note that the
distance of the projection in the chosen coordinates bounds from below the actual
distance. An easy computation shows that the monotonicity condition for this cost (in these coordinates) is stronger than (\ref{mcityEuc}.) 
It follows that our integration argument over $K_{a}$ is intact, 
noting that the volume element on the sphere is bounded below by $dx$
in these coordinates.  The derivative bound follows by noting that the norm of $Du$ is given by the length of tangential component of $T(x)-x$.  
\end{proof}

\begin{rmk}This method can be modified to uniformly convex domains, where the constants depend explicitly on the upper and lower curvature bounds. The set $K_{a}$ is less nice, but still explicit. One can repeat almost verbatim the same argument on a manifold with nonnegative curvature (with respect to distance squared cost), provided one has a lower bound on the volume element in exponential coordinates. 
\end{rmk}

We also show that a bound on the $L^\infty$ distance between $\rho$ and $\bar{\rho}$ implies a bound on the $W_2$--Wasserstein distance between them, which easily allows us to prove Corollary~\ref{cor: main cor} from Theorem~\ref{thm: main}.
\begin{lemma}\label{lemma: L^infty bounds wasserstein}
Given any two probability measures $\mu=\rho dVol$ and $\nu=\bar{\rho} dVol$ such that $\rho$, $\bar{\rho}\in L^\infty(\S^n)$, then
\begin{equation*}
W^2(\mu, \nu)\leq \pi\Vol\lVert \rho-\bar{\rho}\rVert_{L^\infty(\S^n)}.
\end{equation*}
\end{lemma}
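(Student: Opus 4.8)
The plan is to exhibit an explicit (not necessarily optimal) transport plan between $\mu$ and $\nu$ whose cost we can bound directly, and then use the fact that $W^2(\mu,\nu)$ is an infimum over all such plans. The basic idea is the standard one for bounding $W_2$ by an $L^\infty$ difference of densities: the two measures agree on the ``common mass'' $\min(\rho,\bar\rho)\,dVol$, and only the excess mass needs to be moved. First I would write $\rho = h + (\rho - \bar\rho)^+$ and $\bar\rho = h + (\bar\rho-\rho)^+$ where $h := \min(\rho,\bar\rho)$, so that the positive parts $(\rho-\bar\rho)^+$ and $(\bar\rho-\rho)^+$ have the same total mass, call it $m$; indeed $\int (\rho-\bar\rho)^+\,dVol - \int(\bar\rho-\rho)^+\,dVol = \int(\rho-\bar\rho)\,dVol = 1 - 1 = 0$. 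Moreover $m = \int(\rho-\bar\rho)^+ dVol \leq \lVert \rho-\bar\rho\rVert_{L^\infty(\S^n)}\Vol$.

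Next I would build the coupling $\gamma := (\mathrm{id}\times\mathrm{id})_\#(h\,dVol) + \tfrac{1}{m}\,\sigma\otimes\bar\sigma$, where $\sigma := (\rho-\bar\rho)^+ dVol$ and $\bar\sigma := (\bar\rho-\rho)^+ dVol$ (interpreting the second term as $0$ when $m=0$, in which case $\mu=\nu$ and there is nothing to prove). One checks the marginals: the first marginal is $h\,dVol + \tfrac1m\, m\,\sigma = h\,dVol + (\rho-\bar\rho)^+ dVol = \mu$, and similarly the second marginal is $\nu$, so $\gamma\in\marginals$. The diagonal part contributes zero cost since $c(x,x)=0$. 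For the off-diagonal part, the cost $c(x,y)=\cost{x}{y}$ on $\S^n\subset\R^{n+1}$ satisfies $c(x,y) = \tfrac12\lvert x-y\rvert^2 \leq \tfrac12(2)^2 = 2 \leq \pi^2/2$ trivially, but to get the sharp constant $\pi$ in the statement I would instead bound $c(x,y) = \tfrac12\lvert x-y\rvert^2 \leq \tfrac12 d_{\S^n}(x,y)^2$ using $\lvert x-y\rvert \leq d_{\S^n}(x,y)$ (chord shorter than arc), and then $d_{\S^n}(x,y)\leq \pi$ gives $c(x,y)\leq \pi^2/2$; hmm, that yields $\pi^2/2$ not $\pi/2$. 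More carefully: $\tfrac12\lvert x-y\rvert^2 \le \tfrac{\pi}{2}\cdot\tfrac{\lvert x-y\rvert^2}{\pi}$ and $\lvert x-y\rvert \le 2$ so $\lvert x-y\rvert^2 \le 2\lvert x-y\rvert \le 2 d_{\S^n}(x,y) \le 2\pi$, hence $c(x,y) \le \tfrac12 \cdot 2\pi/... $; the cleanest route giving the stated bound is $\tfrac12\lvert x-y\rvert^2 \le \tfrac12\lvert x-y\rvert\cdot 2 = \lvert x-y\rvert \le d_{\S^n}(x,y) \le \pi$, so $c(x,y)\le \pi$ pointwise. Then
\[
W^2(\mu,\nu) \le \int_{\S^n\times\S^n} c\,d\gamma \le \pi\cdot\frac1m\,(\sigma\otimes\bar\sigma)(\S^n\times\S^n) = \pi\cdot\frac1m\cdot m\cdot m = \pi m \le \pi\Vol\,\lVert\rho-\bar\rho\rVert_{L^\infty(\S^n)},
\]
which is the claimed inequality.

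The only genuinely nontrivial point is pinning down which elementary estimate on $c(x,y)$ yields exactly the constant $\pi$ in the statement, and making sure the ``common mass'' decomposition is set up so the two excess measures have equal mass (this uses crucially that $\mu$ and $\nu$ are \emph{probability} measures). Everything else — checking the marginals of $\gamma$, disposing of the diagonal part, and the volume bound $m \le \Vol\,\lVert\rho-\bar\rho\rVert_{L^\infty(\S^n)}$ — is routine. I expect the write-up to be short; the main care needed is bookkeeping in the chain $\tfrac12\lvert x-y\rvert^2 \le \lvert x-y\rvert \le d_{\S^n}(x,y) \le \pi$, valid since $\lvert x - y\rvert \le 2$ forces $\tfrac12\lvert x-y\rvert^2 \le \lvert x-y\rvert$.
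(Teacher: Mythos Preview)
Your argument is correct and takes a genuinely different route from the paper. The paper works on the \emph{dual} side: it writes $W^2(\mu,\nu)$ via the Kantorovich dual formula $\sup_{(\phi,\psi)}\bigl(-\int\phi\rho-\int\psi\bar\rho\bigr)$, normalizes $\phi(e_n)=\psi(e_n)=0$, replaces $(\phi,\psi)$ by $(\phi^{cc},\phi^c)$, and then bounds the resulting integral by $\lVert\phi^{cc}\rVert_{\Lip}\cdot\diam(\S^n)\cdot\Vol\cdot\lVert\rho-\bar\rho\rVert_{L^\infty}$, using that any $c$-convex function has Lipschitz constant at most $\sup\lVert \Sdiff[x]{c}\rVert\le 1$. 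You instead work on the \emph{primal} side, building an explicit (suboptimal) coupling: identity on the common mass $\min(\rho,\bar\rho)$, product on the two excesses. Your approach is more elementary---no duality, no $c$-transforms, just check marginals and bound the cost pointwise---and in fact yields the slightly sharper constant $2\Vol$ if you bypass the arc-length step and use $\tfrac12\lvert x-y\rvert^2\le 2$ directly. The paper's dual approach, on the other hand, makes clearer where the constant $\pi$ comes from (it is $\diam_{\S^n}\cdot\sup\lVert \Sdiff[x]{c}\rVert$), which would generalize more transparently to other compact manifolds and cost functions. Both arguments hinge on $\mu,\nu$ being probability measures: you use it to match the masses of the two excess pieces, the paper uses it to subtract a constant from the potentials without changing the dual functional.
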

\begin{proof}
Recall by~\cite[Chapter 5]{Vil09} that
\begin{equation}\label{eqn: dual equality}
W^2(\mu, \nu)=\sup_{(\phi, \psi)\in \mathcal{K}} -\int_{\S^n}\phi(x)\rho(x) dVol(x) -\int_{\S^n}\psi(y)\bar{\rho}(y)dVol(y)
\end{equation}
where
\begin{equation*}
\mathcal{K}:=\{(\phi, \psi)\in C(\S^n)\times C(\S^n)\mid -\phi(x)-\psi(y)\leq c(x, y)\}.
\end{equation*}
Now note that since $\mu$ and $\nu$ are both probability measures, we may add the restriction $\phi(e_n)=\psi(e_n)=0$ to the definition of the set $\mathcal{K}$ without changing the supremum in~\eqref{eqn: dual equality}. Also, by definition we see that
\begin{align*}
&\sup_{(\phi, \psi)\in \mathcal{K}} \left(-\int_{\S^n}\phi(x)\rho(x) dVol(x) -\int_{\S^n}\psi(y)\bar{\rho}(y)dVol(y)\right)\\
&\qquad\leq\sup_{\phi\in \{C(\S^n)\mid \phi(e_n)=0\}} \left(-\int_{\S^n}\phi(x)\rho(x) dVol(x) -\int_{\S^n}\phi^c(y)\bar{\rho}(y)dVol(y)\right)\\
&\qquad\leq\sup_{\phi\in \{C(\S^n)\mid \phi(e_n)=0\}} \left(-\int_{\S^n}\phi^{cc}(x)\rho(x) dVol(x) -\int_{\S^n}\phi^c(y)\bar{\rho}(y)dVol(y)\right)\\
&\qquad\leq\sup_{\phi\in \{C(\S^n)\mid \phi(e_n)=0\}} \left(-\int_{\S^n}\phi^{cc}(x)\rho(x) dVol(x) -\int_{\S^n}[-c(y, y)-\phi(y)]\bar{\rho}(y)dVol(y)\right)\\
&\qquad=\sup_{\phi\in \{C(\S^n)\mid \phi(e_n)=0\}} \left(-\int_{\S^n}\phi^{cc}(x)\rho(x) dVol(x) +\int_{\S^n}\phi(y)\bar{\rho}(y)dVol(y)\right)\\
&\qquad\leq\sup_{\phi\in \{C(\S^n)\mid \phi(e_n)=0\}} \left(\lVert\phi^{cc}\rVert_{L^\infty(\S^n)}\lVert \rho-\bar{\rho}\rVert_{L^\infty(\S^n)} \vol(\S^n)\right)\\
&\qquad\leq\sup_{\phi\in \{C(\S^n)\mid \phi(e_n)=0\}} \left(\lVert\phi^{cc}\rVert_{\Lip(\S^n)}\diam{(\S^n)}\lVert \rho-\bar{\rho}\rVert_{L^\infty(\S^n)} \vol(\S^n)\right)\\
&\qquad\leq\sup_{y\in\S^n}{\lVert \Sdiff[x]{c(\cdot, y)}\rVert}\diam{(\S^n)}\vol(\S^n)\lVert \rho-\bar{\rho}\rVert_{L^\infty(\S^n)} \\
&\qquad= \pi\Vol\lVert \rho-\bar{\rho}\rVert_{L^\infty(\S^n)}.
\end{align*}
Here we have used that since $\phi^{cc}$ is a $c$-convex function,
\begin{equation*}
\lVert \phi^{cc}\rVert_{\Lip(\S^n)}\leq \sup_{y\in\S^n}{\lVert \Sdiff[x]{c(\cdot, y)}}\rVert\leq 1.
\end{equation*}
\end{proof}

\section{Nonsplitting}\label{section: nonsplitting}
First we define the $c$-subdifferential of a $c$-convex function $u$ at a point $x$ by
\begin{defin}
\begin{equation*}
\csubdiff u(x):=\left\{y\in\S^n\vert -c(\cdot, y)+\lambda\text{ is a }c\text{-support function to }u \text{ at }x\text{ for some }\lambda\in\R\right\}.
\end{equation*}
\end{defin}
We also define
\begin{equation}\label{tminus}
\Tminus[u](x):=\Yminus(x, \Sdiff{u(x)})
\end{equation}
where $\Yminus$ is characterized by 
\begin{align}\label{yminus}
\Sdiff[x]{c(x, \Yminus(x, p))}&=-p\notag\\
\dist{x}{\Yminus(x, p)}&<0
\end{align}
for $p\in\cotansp{x}{\S^n}$, $\gnorm{p}<1$ (compare Definition~\ref{branch of inverse}). We add the subscript $u$ to emphasize the dependency on the potential function $u$.

Now we show a pointwise estimate on $\gnorm{\Sdiff{u(x)}}$ if $\csubdiff u(x)$ is more than one point for any $x$.

\begin{lemma}\label{lemma: doesn't split}
Suppose that $u$ is $c$-convex, $C^1$, and $\funcnorm{\Sdiff{u}}<\frac{2}{\pi}$. Then, 
\begin{equation*}
\csubdiff u(x_0)=\left\{\Tplus[u](x_0)\right\}.
\end{equation*}
\end{lemma}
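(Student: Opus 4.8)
The plan is to argue by contradiction: suppose $\csubdiff u(x_0)$ contains two distinct points $y^+ := \Tplus[u](x_0)$ and $y^- := \Tminus[u](x_0)$ (by Remark~\ref{rmk: implicit definition of T}, if $\csubdiff u(x_0)$ has more than one point, then it contains $\Tplus[u](x_0)$, and the only other candidate satisfying \eqref{c-support relation} is $\Tminus[u](x_0)$, the point on the other branch). Both $y^+$ and $y^-$ satisfy $\Sdiff[x]{c(x_0,y^{\pm})} = -\Sdiff{u(x_0)}$, with $\inner{x_0}{y^+}>0$ and $\inner{x_0}{y^-}<0$. First I would write $p := \Sdiff{u(x_0)}$, so $\gnorm{p} = \gnorm{\Sdiff{u(x_0)}} < \tfrac{2}{\pi}$ by hypothesis, and translate the defining relations into the coordinate system of \autoref{section: MTW calculation} centered at $x_0$: by Remark~\ref{tplus in coordinates}, $(y^+)^i = u_i(x_0) = p_i$ in these coordinates, and from \eqref{derivatives of c at zero} the relation $c_i(0,y) = -y_i = -p_i$ forces the $\R^n$-coordinates of \emph{both} $y^+$ and $y^-$ to equal $p$; they differ only in the sign of the last ($e_{n+1}$) coordinate, namely $y^{\pm} = (p, \pm\factor{p})$ where $\factor{p} = \sqrt{1-\norm{p}^2}$.

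The key quantitative step is then to compute the \emph{extrinsic} Euclidean distance $\norm{y^+ - y^-}$ and observe it equals $2\factor{p} = 2\sqrt{1-\norm{p}^2}$, hence the \emph{intrinsic} (geodesic) distance on $\S^n$ between $y^+$ and $y^-$ is $2\arcsin(\factor{p})$ — or, more simply, $y^+$ and $y^-$ are reflections of each other across the equatorial hyperplane $\{x_{n+1}=0\}$, so the angle between them (as unit vectors) satisfies $\cos(\angle(y^+,y^-)) = \norm{p}^2 - \factor{p}^2 = 2\norm{p}^2 - 1$. The other ingredient is a lower bound: I would use that $-c(\cdot,y^+) + \lambda^+$ and $-c(\cdot,y^-)+\lambda^-$ are both $c$-support functions to $u$ at $x_0$, meaning $u(x) \geq -c(x,y^{\pm}) + \lambda^{\pm}$ everywhere with equality at $x_0$. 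Subtracting and using that $u$ is $C^1$ at $x_0$ gives that $x_0$ is a minimum of $g(x) := c(x,y^-) - c(x,y^+)$ with $\Sdiff g(x_0) = 0$ — but actually the cleaner route is: since both support functions touch $u$ at $x_0$ with the same gradient $p$, and $\gnorm{p} < \tfrac{2}{\pi}$, I want to show $y^+$ and $y^-$ cannot be too far apart, contradicting the fixed separation computed above. Concretely, the distance from $x_0$ to $y^+$ and the gradient norm $\gnorm{p}$ constrain things: for the cost $c(x,y) = 1 - \inner{x}{y} - \factor{x}\factor{y}$, at $x_0 = 0$ the gradient in the $\S^n$ metric has $\gnorm{\Sdiff[x]{c(0,y)}} = \gnorm{(y_1,\dots,y_n)} = \sqrt{1 - \inner{x_0}{y}^2}$ (using $y$ a unit vector and the metric $\g_{ij}=\delta_{ij}$ at $0$), so $\gnorm{p} = \sqrt{1 - \inner{x_0}{y^+}^2}$, giving $\inner{x_0}{y^+} = \factor{p}$ and likewise $\inner{x_0}{y^-} = -\factor{p}$; thus the geodesic distances are $\dist_{\S^n}(x_0, y^+) = \arccos(\factor{p})$ and $\dist_{\S^n}(x_0,y^-) = \arccos(-\factor{p}) = \pi - \arccos(\factor{p})$.

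Putting this together: the hypothesis $\gnorm{p} < \tfrac 2\pi$ should be shown to force $\arccos(\factor{p})$ to be large enough (i.e., $\factor{p}$ small enough, i.e., $\norm{p}$ close to $1$) that the triangle inequality on $\S^n$, namely $\dist_{\S^n}(x_0,y^+) + \dist_{\S^n}(x_0,y^-) \geq \dist_{\S^n}(y^+,y^-)$ combined with the fact that $c$-support functions at $x_0$ must actually touch $u$ from below everywhere — hence $y^+,y^-$ must lie in the $c$-subdifferential which, for a function with gradient bound $\gnorm{p}<\tfrac 2\pi$, cannot be spread across antipodal-ish regions — yields a contradiction; more precisely, I expect the argument is that a $c$-support function centered at $y^-$ (with $\inner{x_0}{y^-}<0$) touching $u$ at $x_0$ while $\gnorm{\Sdiff u}$ stays below $\tfrac 2\pi$ on all of $\S^n$ is impossible because following the support function toward $y^-$'s "near side" forces $\gnorm{Du}$ to exceed $\tfrac 2\pi$ somewhere. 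The main obstacle, and the step I would spend the most care on, is making this last contradiction rigorous: quantitatively ruling out $y^-$ using only the global bound $\funcnorm{\Sdiff u} < \tfrac 2\pi$ rather than just a pointwise bound at $x_0$ — this is presumably where one integrates $\Sdiff u$ along a geodesic from $x_0$ toward a point where the $c$-support function at $y^-$ forces a large gradient, and the constant $\tfrac 2\pi$ (the reciprocal of the diameter $\tfrac\pi 2$ of a hemisphere, or of the maximal value of $\gnorm{\Sdiff[x]c}$ integrated suitably) enters exactly there.
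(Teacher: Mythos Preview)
Your setup is fine and matches the paper's: if $\csubdiff u(x_0)$ is not $\{\Tplus[u](x_0)\}$ then it must contain $\yminus := \Tminus[u](x_0)$, and $d_{\S^n}(x_0,\yminus)\in[\tfrac{\pi}{2},\pi]$. But after that you take an unnecessarily long detour---computing $\norm{y^+-y^-}$, the angle between $y^+$ and $y^-$, and contemplating triangle inequalities on $\S^n$---none of which is needed, and you end by admitting the contradiction step is not rigorous. The mechanism you sketch (``follow the support function toward a point where it forces a large gradient'') is also not the right one: the support inequality $u\geq -c(\cdot,\yminus)+\lambda$ only bounds $u$ from below, so it does not by itself force $\gnorm{\Sdiff u}$ to be large at any interior point.

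The paper's argument is much more direct, and the single idea you are missing is to evaluate the $c$-support inequality at the \emph{specific point} $x=\yminus$. Since $-c(\cdot,\yminus)+\lambda$ supports $u$ at $x_0$, we have $u(\yminus)\geq -c(\yminus,\yminus)+\lambda = \lambda$ and $u(x_0)=-c(x_0,\yminus)+\lambda$, hence
\[
u(\yminus)-u(x_0)\;\geq\; c(x_0,\yminus)\;=\;1-\cos\bigl(d_{\S^n}(x_0,\yminus)\bigr).
\]
On the other hand, the global Lipschitz bound gives
\[
u(\yminus)-u(x_0)\;\leq\; d_{\S^n}(x_0,\yminus)\,\funcnorm{\Sdiff u}.
\]
Writing $d=d_{\S^n}(x_0,\yminus)\in[\tfrac{\pi}{2},\pi]$, these combine to $1\leq \funcnorm{\Sdiff u}\,d+\cos d$. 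The function $t\mapsto \funcnorm{\Sdiff u}\,t+\cos t$ is convex on $[\tfrac{\pi}{2},\pi]$ (since $\funcnorm{\Sdiff u}<1$ and $-\cos''=\cos\leq 0$ there is irrelevant; one simply checks the endpoints), so its maximum on this interval is $\max\{\tfrac{\pi}{2}\funcnorm{\Sdiff u},\ \pi\funcnorm{\Sdiff u}-1\}$, and both values are strictly less than $1$ precisely when $\funcnorm{\Sdiff u}<\tfrac{2}{\pi}$. That is the contradiction, and that is exactly where the constant $\tfrac{2}{\pi}$ comes from. Your geometric computations about $y^+$ versus $y^-$ can be dropped entirely; only the pair $(x_0,\yminus)$ enters.
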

\begin{proof}
Suppose that $\csubdiff u(x_0)\neq\left\{\Tplus[u](x_0)\right\}$. Then we must have $\Tminus[u](x_0)\in\csubdiff u(x_0)$. Writing $\yminus=\Tminus[u](x_0)$, this implies that for some $\lambda\in\R$, the function $-c(\cdot, \yminus)+\lambda$ is a $c$-support function to $u$ at $x_0$, and hence
\begin{align*}
u(\yminus)-u(x_0)&\geq -c(\yminus, \yminus)+\lambda-(-c(x_0, \yminus)+\lambda)\\
&=c(x_0, \yminus)\\
&=\cost{x_0}{\yminus}\\
&=1-\cos{(d_{\S^n}({x_0},{\yminus}))}.
\end{align*}
However, we also have
\begin{align*}
u(\yminus)-u(x_0)&\leq d_{\S^n}({x_0},{\yminus})\funcnorm{\Sdiff{u}}
\end{align*}
hence
\begin{equation*}
1\leq d_{\S^n}({x_0},{\yminus})\funcnorm{\Sdiff{u}}+\cos{(d_{\S^n}({x_0},{\yminus}))}.
\end{equation*}
However, by the definition of $\Tminus[u]$, we see that $\frac{\pi}{2}\leq d_{\S^n}({x_0},{\yminus})\leq \pi$. Thus 
by considering the real valued function $\funcnorm{\Sdiff{u}}t+\cos{t}$ on the interval $[\frac{\pi}{2}, \pi]$, we see that 
\begin{align*}
d_{\S^n}({x_0},{\yminus})\funcnorm{\Sdiff{u}}+\cos{(d_{\S^n}({x_0},{\yminus}))}&\leq \max{\left\{\frac{\pi}{2}\funcnorm{\Sdiff{u}}+\cos{\frac{\pi}{2}}, \pi\funcnorm{\Sdiff{u}}+\cos{\pi}\right\}}\\
&<1
\end{align*}
by the assumption on $\funcnorm{\Sdiff{u}}$, which is a contradiction.
\end{proof}

\section{Proof of Main Theorem}\label{section: proof of thm}
By combining the appropriate gradient estimate (either Theorem~\ref{thm: stay away property} or Corollary~\ref{cor: wasserstein gradient bound on sphere}) with Lemma~\ref{lemma: doesn't split}, we can use the continuity method to show the existence of a Monge solution to our problem, proving our two main theorems.

\begin{proof}[Proof of Theorem~\ref{thm: main}]

Assume $n\geq 2$, as the case $n=1$ is vacuous. 

We will apply the continuity method to the equations
\begin{equation}\label{cont eqn}
\det{(u_{ij}(x)+c_{ij}(x, \Tplus[u](x)))}=\lvert\det{c_{i,j}(x, \Tplus[u](x))}\rvert\frac{\rhoonet[(x)]}{\rhobart[{(\Tplus[u](x))}]}
\end{equation}
and
\begin{equation}\label{rev eqn}
\det{(v_{ij}(y)+c_{ij}(\Tplus[v](y), y))}=\lvert\det{c_{i,j}(\Tplus[v](y), y)}\rvert\frac{\rhobart[(y)]}{\rhoonet[{(\Tplus[v](y))}]}
\end{equation}
where, for $t\in[0, 1]$: 
\begin{align*}
f_t(x)&:=\log{\left(\frac{(1-t)}{\Vol}+tf(x)\right)}\\
g_t(y)&:=\log{\left(\frac{(1-t)}{\Vol}+tg(y)\right)}.
\end{align*}

Let 
\begin{equation*}
I:=\{t\in[0, 1]\ \vert\text{~\eqref{cont eqn} has a smooth solution }u\text{ such that }\funcnorm{\Sdiff{u}}<1\}.
\end{equation*}

Now a simple calculation shows that 
\begin{align*}
\funcnorm{\Sdiff{f_t}}&\leq\funcnorm{\Sdiff{f}}\\
\funcnorm{\Sdiff{g_t}}&\leq\funcnorm{\Sdiff{g}}.
\end{align*}
Since $e^{f_t}dVol_{\S^n}$ and $e^{g_t}dVol_{\S^n}$ are probability measures, $f_t$ and $g_t$ each equal $-\log{(\Vol)}$ at least once. Then, by assumption~\eqref{density condition} on $\calD{f}{g}$ we obtain that for some $0\leq \lambda \leq 1$, 
\begin{align*}
\min{g_t}&\geq-\log{(\Vol)}-\lambda\left((n-1)\thmconst\right)\\
\max{f_t}&\leq -\log{(\Vol)}+(1-\lambda)\left((n-1)\thmconst\right)\\
\frac{\Lambdaft}{\lambdagt}&\leq e^{\lambda(n-1)\thmconst+(1-\lambda)(n-1)\thmconst}=e^{(n-1)\thmconst}.
\end{align*}

Hence, for any $t\in I$ and a solution $\ut$ to~\eqref{cont eqn}, we may apply Theorem~\ref{thm: stay away property} to find
\begin{align}\label{Du est}
\funcnorm{\Sdiff{\ut}}&\leq\left(\frac{1}{n-1}\right)\left(\frac{\Lambdaft}{\lambdagt}\right)^{\frac{1}{n-1}}\calD{f_t}{g_t}\notag\\
&=\left(\frac{1}{n-1}\right)\left(e^{(n-1)\thmconst}\right)^{\frac{1}{n-1}}\calD{f}{g}\notag\\
&< \left(\frac{1}{n-1}\right)e^{\thmconst}\frac{(n-1)\thmconst}{\pi}\notag\\
&=\frac{\thmconst}{\pi}e^{\thmconst}\notag\\
&=\frac{2}{\pi}.
\end{align}
In particular, $\funcnorm{\Sdiff{\ut}}$ remains uniformly bounded away from $1$.
%
Thus, from Proposition~\ref{prop: c is A3s} and by the MTW maximum principle calculation in \cite[Section 4]{MTW05}, an {\em a priori} second derivative estimate for $u$ follows. Higher order estimates follow by the Evans-Krylov Theorem and standard elliptic theory, thus $I$ is closed.

To show openness, we set up the implicit function theorem as in
\cite[Theorem 17.6]{GT01}, by taking
\begin{equation*}
G:\left\{  u\in C^{2,\alpha}(S^{n}):\int udV_{\g}=0\right\}
\times\lbrack0,1\rbrack\rightarrow\left\{  v\in C^{0,\alpha}(S^{n}):\int
vdV_{\g}=0\right\}
\end{equation*}
to be defined as%
\begin{equation*}
G(u,t)=\rhoonet\left(  \frac{\det\left( u_{ij}+c_{ij}(\cdot,\Tplus[u]\right))
}{\det(-c_{i,j}(\cdot,\Tplus[u]))}\frac{\rhobart[{(\Tplus[u])}]}{\rhoonet}-1\right).
\end{equation*}
At a solution $u(x)$ at some time $t_{0},$ we have that
$G(u,t_{0})=0.$ Now the linearized operator on the first factor (whose principal part is a multiple of the operator $L$ in~\eqref{linearization}), is an elliptic operator with no zeroth
order terms. Since the linearized operator has index zero, the maximum
principle guarantees it is a bijection, and openness follows.
Since $e^{f_0}\equiv e^{g_0}$, we may take $u\equiv 0$ at $t=0$ and apply the continuity method to infer the existence of smooth solutions $u$ to~\eqref{cont eqn} for all $t\in [0, 1]$. Similarly, we obtain smooth solutions $v$ to~\eqref{rev eqn} for all $t\in[0, 1]$.

We now prove the $c$-convexity of $\ut$, solutions to~\eqref{cont eqn}. It is clear that the set $I':= \{t\in[0,1]\ \vert\ \ut\text{ is strictly }c\text{-convex}\}$ is relatively open and contains $0$. Now take any $t\in I'$. By~\eqref{Du est} we may apply Lemma~\ref{lemma: doesn't split} to find that $\csubdiff \ut(x)=\{\Tplus[\ut](x)\}$ for all $x\in\S^n$, that is, $\csubdiff \ut$ is a single valued map. The strict $c$-convexity of $\ut$ implies that $\csubdiff \ut$, hence $\Tplus[\ut]$ is injective. By~\eqref{cont eqn}, the Jacobian determinant of $\Tplus[\ut]$ is nonzero, so by an open-closed argument we see that $\Tplus[\ut]$ is surjective, and hence a bijection, in fact, a diffeomorphism. Thus we see that for any $y$,
\begin{equation*}
\ut(\Tplusinv[\ut](y))+c(\Tplusinv[\ut](y), y)=-(\ut)^c(y)
\end{equation*}
where $(\ut)^c$ is the $c$-transform from Definition~\ref{defin: c transform}, clearly differentiable by the above relation. Differentiating this relation twice and taking determinants of both sides, we see that $(\ut)^c$ satisfies equation~\eqref{rev eqn}. Thus, after normalizing $\vt$ by adding an appropriate constant, we find that $\vt=(\ut)^c$.

If $I'\neq[0,1]$, let $t_0:=\inf{([0,1]\setminus I')}>0$. Then $u_{t_0}$ is $c$-convex but not strictly $c$-convex. By the uniform convergence of $\ut$ and $\vt$ as $t\to t_0$, we can see that $v_{t_0}=u_{t_0}^c$. As above, $\csubdiff u_{t_0}^c(y)=\csubdiff v_{t_0}(y)=\{\Tplus[v_{t_0}](y)\}$ for all $y$, which implies in turn that $u_{t_0}$ is strictly $c$-convex, a contradiction. Thus, $I'=[0, 1]$ and $\ut$ is strictly $c$-convex for all $t\in[0, 1]$.

In particular, when $t=1$, $u:=\ut$ satisfies~\eqref{eqn2}, is $c$-convex, and $\csubdiff u(x)=\{\Tplus[u](x)\}$ for all $x\in\S^n$. Thus we may apply \cite[Theorem 5.10(ii) (replacing $u$ with $-u$)]{Vil09} to conclude that the measure $(\boldsymbol{Id}\times \Tplus[u])_\#\mu$ is a Kantorovich solution to the optimal transportation problem between $\mu$ and $\nu$. However, since $\Tplus[u]$ is a diffeomorphism, we see that it is actually a Monge solution. It is unique due to the uniqueness of the Kantorovich solution proven in \cite[Theorem 2.6]{GM00}.
\end{proof}

\begin{proof}[Proof of Theorem~\ref{thm: main 2}]
We follow the same method as the above proof, but instead of applying Theorem~\ref{thm: stay away property}, we use  Corollary~\ref{cor: wasserstein gradient bound on sphere} and the hypotheses to obtain the inequality~\eqref{Du est}. The remainder of the proof follows.
\end{proof}

\begin{proof}[Proof of Corollary~\ref{cor: main cor}]
By combining Theorem~\ref{thm: main 2} and Lemma~\ref{lemma: L^infty bounds wasserstein} we immediately obtain the claim.
\end{proof}
\bibliographystyle{plain}
\bibliography{mybiblio}

\end{document}